\DeclareMathAlphabet{\mathpzc}{OT1}{pzc}{m}{it}
\renewcommand{\subsection}[1]{\vspace{.18in}
\par\noindent\addtocounter{subsection}{1}
\setcounter{equation}{0}{\bf\thesubsection.\hspace{5pt}#1}}
\theoremstyle{definition}
\newtheorem{Def}[subsection]{Definition}
\newtheorem{Rem}[subsection]{Remark}
\theoremstyle{plain}
\newtheorem{Prop}[subsection]{Proposition}
\newtheorem{Thm}[subsection]{Theorem}
\newtheorem{Lem}[subsection]{Lemma}
\newtheorem{Cor}[subsection]{Corollary}
\numberwithin{equation}{subsection}
\newcommand{\bfi}{{\mathbf{i}}}
\newcommand{\bfj}{{\mathbf{j}}}
\newcommand{\bfs}{{\mathbf{s}}}
\newcommand{\bfP}{{\mathbf{P}}}
\newcommand{\bfQ}{{\mathbf{Q}}}
\def\fS{{\frak S}}
\def\fT{{\frak T}}
\def\fkm{{\frak m}}
\newcommand{\ms}{\mathscr}
\def\sfz{{\mathsf z}}
\def\sH{{\mathcal H}}
\def\sL{{\mathcal L}}
\def\sP{{\mathcal P}}
\def\sQ{{\mathcal Q}}
\def\sS{{\mathcal S}}
\def\sT{{\mathcal T}}
\def\sX{{\mathcal X}}
\def\sY{{\mathcal Y}}
\newcommand{\mbn}{\mathbb N}
\newcommand{\mbc}{\mathbb C}
\newcommand{\mbz}{\mathbb Z}
\newcommand{\ttx}{\mathtt{x}}
\newcommand{\ttg}{\mathtt{g}}
\newcommand{\tts}{\mathtt{s}}
\newcommand{\ttt}{\mathtt{t}}
\newcommand{\ttu}{\mathtt{u}}
\newcommand{\ttU}{\mathtt{U}}
\newcommand{\ttV}{\mathtt{V}}
\newcommand{\ttT}{\mathtt{T}}
\newcommand{\ttS}{\mathtt{S}}
\newcommand{\ttf}{\mathtt{f}}
\newcommand{\ttk}{\mathtt{k}}
\newcommand{\tth}{\mathtt{h}}
\newcommand{\Hom}{\operatorname{Hom}}
\newcommand{\res}{\operatorname{res}}
\def\hMod{{\text-}{\mathsf{Mod}}}
\newcommand{\la}{{\lambda}}
\newcommand{\La}{\Lambda}
\newcommand{\dt}{\delta}
\newcommand{\Dt}{\Delta}
\newcommand{\Og}{\Omega}
\newcommand{\og}{\omega}
\newcommand{\up}{q}
\newcommand{\vi}{\varphi}
\newcommand{\ep}{\varepsilon}
\newcommand{\al}{\alpha}
\newcommand{\sg}{\sigma}
\def\th{\theta} 
\newcommand{\leb}{\left[}
\newcommand{\rib}{\right]}
\def\ggp#1#2{\left[\kern-3.2pt\left[{#1\atop #2}\right]\kern-3.2pt\right]}
\def\leq{\leqslant}\def\geq{\geqslant}
\def\le{\leqslant}
\newcommand{\trir}{\vartriangleright}
\newcommand{\ot}{\otimes}
\newcommand{\ti}{\widetilde}
\newcommand{\Lanr}{\Lambda(n,r)}
\newcommand{\lra}{\longrightarrow}
\newcommand{\ra}{\rightarrow}
\newcommand{\lm}{\longmapsto}
\newcommand{\vtg}{{\!\vartriangle}}
\newcommand{\DC}{{\frak D}_{\vtg,\mathbb C}}
\def\ttv{{q}}
\newcommand{\OgnC}{\Og_{n,\mathbb C}}
\newcommand{\OgC}{\Og_{\mathbb C}}
\newcommand{\SrC}{\sS(n,r)_\mbc}
\newcommand{\UglC}{{\rm U}_{\mathbb C}({\frak{gl}}_n)}
\newcommand{\afHrC}{\sH_{\vtg}(r)_\mathbb C}
\newcommand{\afUslC}{\text{\rm U}_{\mathbb C}(\widehat{\frak{sl}}_n)}
\newcommand{\afUglC}{\text{\rm U}_{\mathbb C}(\widehat{\frak{gl}}_n)}
\newcommand{\HrC}{\sH(r)_{\mathbb C}}
\newcommand{\afUnC}{U_{\vtg}(n)_\mbc}
\newcommand{\UnC}{U(n)_\mbc}
\newcommand{\afzrC}{{\zeta}_{\vtg,r}}
\newcommand{\zrC}{{\zeta}_{r}}
\newcommand{\afSrC}{{\mathcal S}_{\vtg}(n,r)_{\mathbb{C}}}
\newcommand{\afsl}{\widehat{\frak{sl}}_n}
\newcommand{\tri}{\triangle(n)}
\newcommand{\evSa}{\widetilde{\mathsf{ev}}_a}
\newcommand{\evHa}{\mathsf{ev}_a}
\newcommand{\evUa}{{{\mathsf{Ev}}}_a}
\def\res{{\text{res}}}
\newcommand{\Lcp}{\bar L}
\begin{document}
\title{Small Representations for Affine $q$-Schur Algebras}
\author{Jie Du}
\address{School of Mathematics, University of New South Wales,
Sydney 2052, Australia.} \email{j.du@unsw.edu.au}

\author{Qiang Fu}
\address{Department of Mathematics, Tongji University, Shanghai, 200092, China.}
\email{q.fu@hotmail.com}

\date{\today}

\thanks{Supported by the UNSW 2011 Goldstar Award and the National Natural Science Foundation
of China, the Program NCET, and the Fundamental Research Funds for the Central Universities}

\begin{abstract} When the parameter $q\in\mbc^*$ is not a root of unity, simple modules of affine $q$-Schur algebras have been classified in terms of Frenkel--Mukhin's dominant Drinfeld polynomials (\cite[4.6.8]{DDF}).
We compute these Drinfeld polynomials associated with the simple modules of an affine $q$-Schur algebra which come from the simple modules of the corresponding $q$-Schur algebra via the evaluation maps.
\end{abstract}

 \sloppy \maketitle
\section{Introduction}
Small representations of quantum affine $\mathfrak{sl}_n$ are the representations which are
irreducible when regarded as representations of (non-affine) quantum $\mathfrak{sl}_n$. In other words,
 by the evaluation maps \cite{Ji} from $\afUslC$ to $\UglC$, these representations are obtained from irreducible representations of quantum $\mathfrak{gl}_n$. Small representations have been identified by Chari--Pressley \cite{CP94} in terms of Drinfeld polynomials whose roots are described explicitly but fairly complicatedly.

  When the parameter is not a root of unity, simple representations of affine $q$-Schur algebras have also been classified \cite[Ch.~4]{DDF} in terms of finite dimensional simple polynomial representation for the quantum loop algebras of $\mathfrak{gl}_n$. These representations are labeled by dominant Drinfeld polynomials in the sense of \cite{FM}. Using the evaluation maps from affine $q$-Schur algebras to $q$-Schur algebras, every irreducible representation of a $q$-Schur algebra becomes an irreducible representations of the corresponding affine $q$-Schur algebra. Motivated by the work of \cite{CP94}, we will identify these small representations of affine $q$-Schur algebras in this paper by working out precisely their associated dominant Drinfeld polynomials.

Our method uses directly evaluation maps from affine $q$-Schur algebras to $q$-Schur algebras. By a compatibility relation with evaluation maps for quantum $\afsl$ and $\mathfrak{gl}_n$ (Proposition~\ref{commuting equation}), we will reproduce Chari--Pressley's result \cite[3.5]{CP94}
with simplified formulas for the roots of Drinfeld polynomials associated with small representations of $\afUslC$. In this way, the dominant Drinfeld polynomials for small representations of affine $q$-Schur algebras can be easily described by their roots in segments; see Corollary~\ref{DPforSR}.

 In a forthcoming paper, we will look at a more general question. Since every simple representations of an affine $q$-Schur algebra can be obtained by a generalized evaluation map from a simple representation of a certain cyclotomic $q$-Schur algebra introduced by Lin--Rui \cite{LR}, it would be interesting to classify those which are inflated from {\it semisimple cyclotomic} $q$-Schur algebras. By identifying them, we would be able to construct a completely reducible full subcategory of finite dimensional modules of an affine $q$-Schur algebra.

The paper is organized as follows. The first three sections are preliminary. Starting with the definitions of the double Ringel-Hall algebra $\DC(n)$ and the quantum loop algebra $\afUglC$ of $\mathfrak{gl}_n$ and an isomorphism between them in \S2, we discuss in \S3 polynomial representations of $\afUglC$ and the tensor space representations of $\DC(n)$ and present a classification of simple modules for the affine $q$-Schur algebras.
In \S4, we look at the classification of simple modules of the affine $q$-Schur algebras arising from representations of affine Hecke algebra.
Evaluation maps from affine $q$-Schur algebras to $q$-Schur algebras are defined in \S5 and we also prove a certain compatibility identity associated with the evaluation maps for quantum groups. In \S6, we reproduce
naturally a result of Chari--Pressley by simplifying the formulas for the roots of Drinfeld polynomials associated with small representations of quantum $\afsl$.
Dominant Drinfeld polynomials associated with small representations of affine $q$-Schur algebras are computed in \S7. We end the paper with an application to representations of affine Hecke algebras.

{\it Throughout the paper, $\ttv\in\mathbb C^*:=\mbc\backslash\{0\}$ denote a complex number which is not a root of unity.} For $m,n\in\mbz^+:=\mbn\backslash\{0\}$, $m\leq n$, let
$$[m,n]=\{m,m+1,\ldots,n\},\quad  [n]_q=\frac{q^n-q^{-n}}{q-q^{-1}},\;\text{ and }\;\biggl[{n\atop m}\biggr]=\frac{[n]_q[n-1]_q\cdots [n-m+1]_q}{[m]_q[m-1]_q\cdots [1]_q}.$$
All algebras are over $\mbc$.

\section{Quantum loop algebras: a double Ringel--Hall algebra interpretation}

We define two algebras by their generators and relations and give an explicit isomorphism between them.
The first algebra is constructed as a Drinfeld double of two extended Ringel--Hall algebras.

Let $(c_{i,j})$ be the Cartan matrix of affine type $A$ and let
$I=\mbz/n\mbz=\{1,2,\ldots,n\}$.

\begin{Def}[{\cite[2.2.3]{DDF}}]\label{presentation dHallAlg} The double Ringel--Hall algebra $\DC(n)$ of
the cyclic quiver $\tri$ is the $\mbc$-algebra generated by
$E_i,\ F_i,\  K_i,\ K_i^{-1},\ \sfz^+_s,\ \sfz^-_s,$ for $i\in I,\
s\in\mbz^+$, and relations:
\begin{itemize}
\item[(QGL1)] $K_{i}K_{j}=K_{j}K_{i},\ K_{i}K_{i}^{-1}=1$;

\item[(QGL2)] $K_{i}E_j=\up^{\dt_{i,j}-\dt_{i,j+1}}E_jK_{i}$,
$K_{i}F_j=\up^{-\dt_{i, j}+\dt_{ i,j+1}} F_jK_i$;

\item[(QGL3)] $E_iF_j-F_jE_i=\delta_{i,j}\frac
{\ti K_{i}-{\ti K_{i}}^{-1}}{\up-\up^{-1}}$, where $\ti K_i=
K_iK_{i+1}^{-1}$;

\item[(QGL4)]
$\displaystyle\sum_{a+b=1-c_{i,j}}(-1)^a\leb{1-c_{i,j}\atop a}\rib_q
E_i^{a}E_jE_i^{b}=0$ for $i\not=j$;

\item[(QGL5)]
$\displaystyle\sum_{a+b=1-c_{i,j}}(-1)^a\leb{1-c_{i,j}\atop a}\rib_q
F_i^{a}F_jF_i^{b}=0$ for $i\not=j$;

\item[(QGL6)] $\sfz^+_s\sfz^+_t=\sfz^+_t\sfz^+_s$,$\sfz^-_s\sfz^-_t=\sfz^-_t\sfz^-_s$,  $\sfz^+_s\sfz^-_t=\sfz^-_t\sfz^+_s$;
 \item[(QGL7)] $K_i\sfz^+_s=\sfz^+_s K_i$, $K_i\sfz^-_s=\sfz^-_s K_i$;

 \item[(QGL8)] $E_i\sfz^+_s=\sfz^+_s E_i$, $E_i\sfz^-_s=\sfz^-_s E_i,$ $F_i\sfz^-_s=\sfz^-_s F_i$, and  $\sfz^+_s F_i=F_i\sfz^+_s$,
\end{itemize}
where $i,j\in I$ and $s,t\in \mbz^+$.
It is a Hopf algebra with
comultiplication $\Dt$, counit $\ep$, and antipode $\sg$ defined
by
\begin{eqnarray*}\label{Hopf}
&\Delta(E_i)=E_i\otimes\ti K_i+1\otimes
E_i,\quad\Delta(F_i)=F_i\otimes
1+\ti K_i^{-1}\otimes F_i,&\\
&\Delta(K^{\pm 1}_i)=K^{\pm 1}_i\otimes K^{\pm 1}_i,\quad
\Delta(\sfz_s^\pm)=\sfz_s^\pm\otimes1+1\otimes
\sfz_s^\pm;&\\
&\varepsilon(E_i)=\varepsilon(F_i)=0=\varepsilon(\sfz_s^\pm),
\quad \varepsilon(K_i)=1;&\\
&\sg(E_i)=-E_i\ti K_i^{-1},\quad \sg(F_i)=-\ti K_iF_i,\quad
\sg(K^{\pm 1}_i)=K^{\mp 1}_i,&\\
&\text{and}\;\;\sg(\sfz_s^\pm)=-\sfz_s^\pm,&
\end{eqnarray*}
 where $i\in I$ and $s\in \mbz^+$.


Let $\afUslC$ be the subalgebra generated by $E_i,\ F_i,\  \ti K_i,\ti K_i^{-1}$ for $i\in [1,n]$. This is the quantum affine $\frak {sl}_n$.
Let $\afUnC$ (resp., $U(n)_\mbc$) be the subalgebra generated by $E_i,\ F_i,\  K_j,\ K_j^{-1}$ for $i,j\in [1,n]$ (resp., $i\in[1,n),j\in[1,n]$). This is the (extended) quantum
affine $\mathfrak{sl}_n$ (resp. quantum $\mathfrak{gl}_n$).
\end{Def}

The second algebra follows Drinfeld \cite{Dr88}.

\begin{Def}\label{QLA}
 The {\it quantum loop algebra} $\afUglC$  (or {\it
quantum affine $\mathfrak {gl}_n$}) is the $\mbc$-algebra generated by $\ttx^\pm_{i,s}$
($1\leq i<n$, $s\in\mbz$), $\ttk_i^{\pm1}$ and $\ttg_{i,t}$ ($1\leq
i\leq n$, $t\in\mbz\backslash\{0\}$) with the following relations:
\begin{itemize}
 \item[(QLA1)] $\ttk_i\ttk_i^{-1}=1=\ttk_i^{-1}\ttk_i,\,\;[\ttk_i,\ttk_j]=0$;
 \item[(QLA2)]
 $\ttk_i\ttx^\pm_{j,s}=\up^{\pm(\dt_{i,j}-\dt_{i,j+1})}\ttx^\pm_{j,s}\ttk_i,\;
               [\ttk_i,\ttg_{j,s}]=0$;
 \item[(QLA3)] $[\ttg_{i,s},\ttx^\pm_{j,t}]
               =\begin{cases}0\;\;&\text{if $i\not=j,\,j+1$},\\
                  \pm \up^{-is}\frac{[s]_q}{s}\ttx^\pm_{i,s+t},\;\;\;&\text{if $i=j$},\\
                  \mp \up^{(1-i)s}\frac{[s]_q}{s}\ttx_{i-1,s+t}^\pm,\;\;\;&\text{if $i=j+1$;}
                \end{cases}$
 \item[(QLA4)] $[\ttg_{i,s},\ttg_{j,t}]=0$;
 \item[(QLA5)] $[\ttx_{i,s}^+,\ttx_{j,t}^-]=\dt_{i,j}\frac{\phi^+_{i,s+t}-\phi^-_{i,s+t}}{\up-\up^{-1}}$;
 \item[(QLA6)] $\ttx^\pm_{i,s}\ttx^\pm_{j,t}=\ttx^\pm_{j,t}\ttx^\pm_{i,s}$, for $|i-j|>1$, and
 $[\ttx_{i,s+1}^\pm,\ttx^\pm_{j,t}]_{\up^{\pm c_{ij}}}
               =-[\ttx_{j,t+1}^\pm,\ttx^\pm_{i,s}]_{\up^{\pm c_{ij}}}$;
 \item[(QLA7)] $[\ttx_{i,s}^\pm,[\ttx^\pm_{j,t},\ttx^\pm_{i,p}]_\up]_\up
               =-[\ttx_{i,p}^\pm,[\ttx^\pm_{j,t},\ttx^\pm_{i,s}]_\up]_\up\;$ for
               $|i-j|=1$, \index{defining relations!({\rm QLA1})--({\rm QLA7})}
\end{itemize}
 where $[x,y]_a=xy-ayx$ and $\phi_{i,s}^\pm$ are defined by the
 generating functions in indeterminate $u$:
$$\Phi_i^\pm(u):={\ti\ttk}_i^{\pm 1}
\exp(\pm(\up-\up^{-1})\sum_{m\geq 1}\tth_{i,\pm m}u^{\pm m})=\sum_{s\geq
0}\phi_{i,\pm s}^\pm u^{\pm s}$$ with $\ti\ttk_i=\ttk_i/\ttk_{i+1}$
($\ttk_{n+1}=\ttk_1$) and
$\tth_{i,m}=\up^{(i-1)m}\ttg_{i,m}-\up^{(i+1)m}\ttg_{i+1,m}\,(1\leq
i<n).$
\end{Def}

Beck \cite{Be} proved that the subalgebra, called the quantum loop algebra of $\mathfrak{sl}_n$, generated by all $\ttx^\pm_{i,s}$, $\ti\ttk_i^{\pm1}$ and $\tth_{i,t}$ is isomorphic to $\afUslC$.

Set, for each $s\in\mbz^+$ and each $i\in[1,n)$,
\begin{equation}\label{sXi def}
\aligned
\th_{\pm s}&=\mp\frac1{[s]_q}(\ttg_{1,\pm s}+\cdots+\ttg_{n,\pm s}),\\
\sX_i&=[\ttx_{n-1,0}^-,[\ttx_{n-2,0}^-,\cdots,
[\ttx_{i+1,0}^-,[\ttx_{1,0}^-,\cdots,[\ttx_{i-1,0}^-,
\ttx_{i,1}^-]_{\ttv^{-1}}\cdots]_{\ttv^{-1}}]_{\ttv^{-1}}
]_{\ttv^{-1}}]_{\ttv^{-1}},\;\text{ and}\\
\sY_1&=[\cdots[[\ttx_{1,-1}^+,\ttx_{2,0}^+]_\ttv,\ttx_{3,0}^+]_\ttv,
 \cdots,\ttx_{n-1,0}^+]_\ttv.\endaligned
\end{equation}

The following isomorphism extends Beck's isomorphism.

\begin{Thm}[{\cite[4.4.1]{DDF}}]\label{DDFIsoThm}
 There is a Hopf  algebra isomorphism\footnote{Note that $f|_{\afUslC}$ is the same map as given in \cite[Prop.~6.1]{CP96} as $[x,y]_{q^{-1}}=q^{-1/2}(q^{1/2}xy-q^{-1/2}yx)$. However, the map used in
 \cite[Prop.~2.5]{CP94} is the $f$ here followed by an automorphism of the form $\ttx_{i,s}^\pm\mapsto a^s\ttx_{i,s}^\pm$, $\ttg_{i,t}\mapsto a^t\ttg_{i,t}$, and $\ttk_i\mapsto\ttk_i$ for some $a\in\mbc$.}
 $$f:\DC(n)\lra
\afUglC$$ such that
$$\aligned
&K_i^{\pm1}\lm\ttk_i^{\pm1},\;\quad
E_j\lm \ttx^+_{j,0},\;\quad
F_j\lm \ttx^-_{j,0}
\;(1\leq i\leq n,\,1\leq j<n),\;\\
&E_n\lm \ttv\sX_1\ti\ttk_n,
\quad F_n\lm\ttv^{-1}\ti\ttk_n^{-1}\sY_1,
\quad
\sfz^\pm_s\lm
\mp s\ttv^{\pm s}\th_{\pm
s}\;(s\geq 1).\\
\endaligned$$

\end{Thm}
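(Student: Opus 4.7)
The plan is to build on Beck's isomorphism \cite{Be}, which already identifies $\afUslC$ with the subalgebra of $\afUglC$ generated by the $\ttx^\pm_{i,s}$, $\ti\ttk_i^{\pm 1}$, and $\tth_{i,t}$. To extend this to a Hopf isomorphism $\DC(n) \cong \afUglC$ I must (a) promote the Cartan part from the ratios $\ti K_i = K_i K_{i+1}^{-1}$ to the full torus $K_1,\dots,K_n$ and (b) incorporate the central generators $\sfz^\pm_s$. I would proceed in three stages: well-definedness, bijectivity, and Hopf compatibility.

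\textbf{Well-definedness.} All defining relations (QGL1)--(QGL5) for the affine $\mathfrak{sl}_n$ part restrict, up to the passage from $\ti K_i$ to $K_i$, to relations inside $\afUslC$ and hold by Beck's theorem; the $K_i$-enhancement is handled by the identity $\ti K_i = K_iK_{i+1}^{-1}$ together with (QLA1)--(QLA2). The new ingredient is the centrality in $\afUglC$ of
\[
\th_{\pm s}=\mp\frac{1}{[s]_q}\sum_{i=1}^n \ttg_{i,\pm s},
\]
which encodes (QGL6)--(QGL8). This reduces via (QLA3) to the telescoping identity $[\ttg_{j,s}+\ttg_{j+1,s},\,\ttx^\pm_{j,t}]=0$ for $1\leq j<n$: the two contributions cancel because both produce the factor $\ttv^{-js}[s]_q/s$ times $\ttx^\pm_{j,s+t}$ with opposite signs. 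Since $\sX_1$ and $\sY_1$ are iterated $\ttv$-commutators of $\ttx^\pm_{i,s}$ with $1\leq i<n$ only, this centrality propagates to $f(E_n)$ and $f(F_n)$ as well.

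\textbf{Bijectivity and Hopf compatibility.} Both algebras admit triangular decompositions whose Cartan parts are polynomial over the torus: $\DC(n) = \dHam(n)\cdot\dHaz(n)\cdot\dHap(n)$ with $\dHaz(n)$ generated by $K_i^{\pm1}$ and the $\sfz^\pm_s$, while $\afUglC$ admits its Drinfeld triangular decomposition. On the off-diagonal parts $f$ restricts to Beck's bijection. On the Cartan part, the map sends $\sfz^\pm_s$ essentially to $\sum_i\ttg_{i,\pm s}$; together with Beck's identification $\tth_{i,s}=\ttv^{(i-1)s}\ttg_{i,s}-\ttv^{(i+1)s}\ttg_{i+1,s}$ for $1\leq i<n$, this is an invertible linear change of coordinates recovering each $\ttg_{i,s}$, so $f$ is bijective. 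Hopf compatibility is then checked on generators: the $\mathfrak{sl}_n$ case is Beck's, the torus is immediate, and for $\sfz^\pm_s$ it reduces to the primitivity of $\th_{\pm s}$ in $\afUglC$, which follows from the centrality computation above and the standard form of $\Delta$ on the $\ttg_{i,s}$.

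\textbf{Main obstacle.} The substantive difficulty is inherited from Beck's theorem, namely verifying that $\ttv\sX_1\ti\ttk_n$ and $\ttv^{-1}\ti\ttk_n^{-1}\sY_1$ behave as affine Chevalley generators at the ``imaginary'' node with respect to the $\ttx^\pm_{j,0}$, $1\leq j<n$, which involves checking nontrivial quantum Serre relations built from the iterated commutators defining $\sX_1,\sY_1$. The additional work beyond Beck --- the telescoping centrality of $\sum_i\ttg_{i,s}$ and the Cartan change-of-coordinates --- is short and routine.
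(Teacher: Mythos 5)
The paper does not actually prove this theorem: it is quoted verbatim from \cite[4.4.1]{DDF}, so there is no in-paper argument to match your sketch against, and it has to be judged on its own terms. The well-definedness part of your outline is essentially sound: the relations internal to $\afUslC$ are indeed Beck's theorem \cite{Be}, the extra torus relations are routine weight computations, and your cancellation $[\ttg_{j,s}+\ttg_{j+1,s},\ttx^\pm_{j,t}]=0$ from (QLA3) correctly gives centrality of $\th_{\pm s}$, hence (QGL6)--(QGL8). The genuine gap is in your Hopf-compatibility step. Definition~\ref{QLA} carries no coalgebra structure, so you must first say which Hopf structure on $\afUglC$ you are comparing with; and for the Drinfeld loop generators $\ttg_{i,s}$ there is no ``standard form of $\Delta$'' --- the coproduct of imaginary loop generators is known only modulo filtration terms. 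In particular, ``centrality of $\th_{\pm s}$ plus the form of $\Delta$ on the $\ttg_{i,s}$ gives primitivity'' is not an argument (central elements are very far from automatically primitive), and in \cite{DDF} this is precisely where the Hall-algebra double enters: the Hopf structure is controlled on the Hall-algebra side, not read off from the Drinfeld presentation.

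Your bijectivity argument also leans on a decomposition that does not exist in the form you use it. Beck's isomorphism does not respect the Drinfeld--Jimbo triangular decomposition of $\afUslC$ versus the loop triangular decomposition of the target: the elements $\tth_{i,s}$ with $s>0$ sit in the loop-Cartan part but correspond to imaginary root vectors inside the DJ positive part, so ``on the off-diagonal parts $f$ restricts to Beck's bijection'' is not meaningful as stated. Moreover, from the bare presentation of $\DC(n)$ you only get the spanning statement $\DC(n)=\afUnC\cdot\mbc[\sfz^\pm_s]$; the linear independence (algebraic independence of the $\sfz^\pm_s$ over $\afUnC$, i.e.\ a PBW-type basis for the presented algebra) is a nontrivial input, supplied in \cite{DDF} by the realization of $\DC(n)$ as a Drinfeld double of Ringel--Hall algebras. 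Your invertible change of coordinates between $\{\tth_{i,s},\th_{s}\}$ and $\{\ttg_{i,s}\}$ does yield surjectivity, but injectivity needs to be run as: map the spanning monomials of the source to $\afUglC$ and prove their images independent there, using Beck together with the fact that $\afUglC$ is free over the central polynomial algebra in the $\th_{\pm s}$ --- a statement you would still have to justify or cite.
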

It is known from \cite[Rem.~6.1]{CP96} that, for every $1\leq i\leq n-1$,
\begin{equation}\label{sXi}
f(E_n)=\ttv\sX_1\ti\ttk_n=(-1)^{i-1}\ttv\sX_i\ti\ttk_n.
\end{equation}

The following elements we define will be used in defining pseudo-highest modules in next section.

For $1\leq j\leq n-1$ and $s\in\mbz$, define the elements $\ms
P_{j,s}\in\afUglC$ through the generating functions
\begin{equation*}
\begin{split}
& \ms P_j^\pm(u):=\exp\bigg(-\sum_{t\geq
1}\frac{1}{[t]_\ttv}\tth_{j,\pm t} (\ttv u)^{\pm
t}\bigg)=\sum_{s\geq 0}\ms P_{j,\pm s} u^{\pm
s}\in\afUglC[[u,u^{-1}]].
\end{split}
\end{equation*}
Note that
\begin{equation}\label{PhiP relation}
\Phi_j^\pm(u)=\ti\ttk_j^{\pm1}\frac{\ms P_j^\pm(\ttv^{-2}u)}{\ms
P_j^\pm(u)}.
\end{equation}

For $1\leq i\leq n$ and $s\in\mbz$, define the elements $\ms
Q_{i,s}\in\afUglC$ through the generating functions
\begin{equation}\label{scrQ}
\begin{split}
&\quad\qquad\ms Q_i^\pm(u):=\exp\bigg(-\sum_{t\geq
1}\frac{1}{[t]_\ttv}\ttg_{i,\pm t} (\ttv u)^{\pm t}\bigg)=\sum_{s\geq
0}\ms Q_{i,\pm s} u^{\pm s}\in\afUglC[[u,u^{-1}]].
\end{split}
\end{equation}
Note that
\begin{equation}\label{relation between P and Q}
\begin{split}
& \ms P_j^\pm(u)=\frac{\ms Q_j^\pm(u\ttv^{j-1})}{\ms
Q_{j+1}^\pm(u\ttv^{j+1})},
\end{split}
\end{equation}
for $1\leq j\leq n-1$.

\section{Simple pseudo-highest weight $\afUglC$-modules}

Let $V$ be a {\it weight} $\afUslC$-module (resp., weight $\afUglC$-module) of type 1. Then $V=\oplus_{\mu\in\mbz^{n-1}}V_\mu$ (resp., $V=\oplus_{\la\in\mbz^{n}}V_\la$) where
$$V_\mu=\{v\in V\mid \ti\ttk_jw=\ttv^{\mu_j}w,\,\forall 1\leq j<n\}\quad(\text{resp.},\,\, V_\la=\{v\in V\mid \ttk_jw=\ttv^{\la_j}w,\,\forall 1\leq j\leq n\}).
$$
Nonzero elements of $V_\la$ are called weight vectors.

Following \cite[12.2.4]{CPbk}, a nonzero weight vector $w\in V$ is called a {\it pseudo-highest weight vector},
\index{pseudo-highest weight vector} if there exist some
$P_{j,s}\in\mbc$ (resp. $Q_{i,s}\in\mbc$) such that
\begin{equation}\label{HWvector}
\ttx_{j,s}^+w=0,\quad\ms P_{j,s} w=P_{j,s} w,\quad(\text{resp.},\,\, \ttx_{j,s}^+w=0,\quad\ms Q_{i,s}w=Q_{i,s}w).
\end{equation}
for all $s\in\mbz$. The module $V$ is called a
{\it pseudo-highest weight module} if $V=\afUslC w$ (resp., $V=\afUglC w$) for some pseudo-highest weight
vector $w$.

Following \cite{FM}, an $n$-tuple of polynomials
$\bfQ=(Q_1(u),\ldots,Q_n(u))$ with constant terms $1$ is called {\it
dominant} if, for $1\leq i\leq n-1$, the ratios
$Q_i(\ttv^{i-1}u)/Q_{i+1}(\ttv^{i+1}u)$ is a polynomial. Let
$\sQ(n)$ be the set of dominant $n$-tuples of polynomials.

For $\bfQ=(Q_1(u),\ldots,Q_{n}(u))\in\sQ(n)$, define
$Q_{i,s}\in\mbc$ ($1\leq i\leq n$, $s\in\mbz$) by the
following formula
$$Q_i^\pm(u)=\sum_{s\geq 0}Q_{i,\pm s}u^{\pm s}.$$
Here
$f^+(u)=\prod_{1\leq i\leq m}(1-a_iu)\iff f^-(u)=\prod_{1\leq i\leq m}(1-a_i^{-1}u^{-1}).$

 Let $I(\bfQ)$
be the left ideal of $\afUglC$ generated by $\ttx_{j,s}^+ , \ms
Q_{i,s}-Q_{i,s},$ and $\ttk_i-\ttv^{\la_i}$, where $1\leq j\leq n-1$,
$1\leq i\leq n$, $s\in\mbz$, and $\la_i=\mathrm{deg}Q_i(u)$. Define Verma type module
$$M(\bfQ)=\afUglC/I(\bfQ).$$
Then $M(\bfQ)$ has a unique (finite dimensional) simple quotient, denoted by $L(\bfQ)$.
The polynomials $Q_i(u)$ are called {\it Drinfeld
polynomials} associated with $L(\bfQ)$.

Similarly, for an $(n-1)$-tuples $\bfP=(P_1(u),\ldots,P_{n-1}(u))\in\sP(n)$ of polynomials with constant terms $1$, define $P_{j,s}\in\mbc$ ($1\leq j\leq n-1$, $s\in\mbz$) as in $P_j^\pm(u)=\sum_{s\geq
0}P_{j,\pm s} u^{\pm s}$ and let $\mu_j=\mathrm{deg}P_j(u)$. Replacing $\ms
Q_{i,s}-Q_{i,s}, \ttk_i-\ttv^{\la_i}$ by $\ms
P_{i,s}-P_{i,s}, \ti\ttk_i-\ttv^{\mu_i}$ in the above construction defines a simple $\afUslC$-module
$\Lcp(\bfP)$. The polynomials $P_i(u)$ are called {\it Drinfeld
polynomials} associated with $\Lcp(\bfP)$.

\begin{Thm}\label{classification of simple afUglC-modules}
$(1)$(\cite{FM}) The $\afUglC$-modules $L(\bfQ)$ with $\bfQ\in\sQ(n)$ are all
nonisomorphic finite dimensional simple polynomial representations
of $\afUglC$. Moreover,
$$L(\bfQ)|_{\afUslC}\cong \bar L(\bfP)$$ where
$\bfP=(P_1(u),\ldots,P_{n-1}(u))$ with
$P_i(u)=Q_i(\ttv^{i-1}u)/Q_{i+1}(\ttv^{i+1}u)$.

$(2)$(\cite{CP91,CPbk}) Let $\sP(n)$ be the set
of $(n-1)$-tuples of polynomials with constant terms $1$. The modules $\Lcp(\bfP)$ with $\bfP\in\sP(n)$ are all nonisomorphic
finite dimensional simple $\afUslC$-modules of  type $1$.
\end{Thm}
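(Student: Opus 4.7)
The plan is to adapt the classical Chari--Pressley--Frenkel--Mukhin classification scheme to the double Ringel--Hall presentation, exploiting the isomorphism $f$ of Theorem~\ref{DDFIsoThm}. Part (2) is the classical Chari--Pressley theorem and I focus on part (1), deducing the restriction statement at the end.

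\textbf{Construction and finite-dimensionality of $L(\bfQ)$.} First I would verify from (QLA1)--(QLA7) that $\afUglC$ admits a triangular decomposition $\afUglC=\afUglC^-\cdot\afUglC^0\cdot\afUglC^+$, with the three factors generated respectively by the $\ttx^-_{j,s}$, by the $\ttk_i^{\pm 1}$ and $\ttg_{i,t}$, and by the $\ttx^+_{j,s}$. This endows $M(\bfQ)$ with a weight decomposition whose weight spaces are finite dimensional and whose top line is one-dimensional, producing a unique maximal proper submodule and hence a simple quotient $L(\bfQ)$. To see $L(\bfQ)$ is finite dimensional, I would restrict to $\afUslC$: by (\ref{PhiP relation}), the eigenvalues of the $\ms P_{j,s}$ on the pseudo-highest weight vector correspond to $\bfP=(P_1,\ldots,P_{n-1})$ with $P_j(u)=Q_j(\ttv^{j-1}u)/Q_{j+1}(\ttv^{j+1}u)$, and this tuple lies in $\sP(n)$ precisely by the dominance of $\bfQ$. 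The Chari--Pressley finite-dimensionality theorem for $\afUslC$ (part (2)) then ensures that the corresponding $\afUslC$-pseudo-highest weight quotient is finite dimensional, and since the central-like $\th_{\pm s}$ act on a simple module by scalars, $L(\bfQ)$ itself is finite dimensional.

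\textbf{Converse and rationality.} Conversely, given a finite dimensional simple polynomial $\afUglC$-module $V$, I would first argue that the commuting action of $\afUglC^0$ (commutativity follows from (QLA1) and (QLA4)) admits a joint eigenvector $w\in V$, which can be chosen maximal in the $\afUslC$-weight order and hence annihilated by all $\ttx^+_{j,s}$. Collecting the eigenvalues of the $\ms Q_{i,\pm s}$ on $w$ into generating series $Q_i^\pm(u)$, the main obstacle---the heart of the theorem---is the \emph{rationality step}: one must show that $Q_i^+(u)$ and $Q_i^-(u)$ are the expansions at $0$ and $\infty$ of a single rational function, normalized as $Q_i(u)/Q_i(0)$ with $Q_i\in\mbc[u]$, and that the resulting $\bfQ$ is dominant. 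This is the content of \cite{FM}, established via a delicate study of the interaction of the $\ms Q_i^\pm(u)$ with $\afUslC$-subquotients and of the integrality of $\afUslC$-weights on a polynomial representation; polynomiality of $V$ is essential here. Once rationality is in hand, $V\cong L(\bfQ)$ by the universal property of $M(\bfQ)$, and dominance of $\bfQ$ follows from applying (\ref{PhiP relation}) to the $\bar L(\bfP)$ appearing in $V|_{\afUslC}$.

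\textbf{The restriction formula and part (2).} For the last assertion of (1), note that in the simple module $L(\bfQ)$ the central-like elements $\th_{\pm s}$ act as scalars, so the $\afUglC^0$-orbit of the pseudo-highest weight vector $w$ coincides with $\mbc w$; consequently $L(\bfQ)=\afUglC^-\cdot w=\afUslC\cdot w$ as $\afUslC$-modules. Now (\ref{PhiP relation}) translates the $\ms Q_{i,s}$-eigenvalues of $w$ into the $\ms P_{j,s}$-eigenvalues corresponding to $\bfP$, whence $L(\bfQ)|_{\afUslC}\cong\bar L(\bfP)$ by part (2). Part (2) itself is proved by the same template with $\ms Q$ replaced by $\ms P$ throughout: construct $\bar L(\bfP)$ as the simple quotient of the corresponding Verma-type $\afUslC$-module, prove finite-dimensionality by a direct analysis of the $\ttx^\pm_{j,s}$-action using (QLA6)--(QLA7) as in \cite{CP91,CPbk}, and establish the converse via joint eigenvectors and rationality for the $\ms P_j^\pm(u)$.
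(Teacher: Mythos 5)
This theorem is not proved in the paper: part (1) is quoted from \cite{FM} and part (2) from \cite{CP91,CPbk}, so there is no internal argument to compare you with. Your outline is essentially the standard Frenkel--Mukhin/Chari--Pressley route (triangular decomposition, Verma-type module, joint eigenvector for $\afUglC^0$ annihilated by the $\ttx^+_{j,s}$, rationality), and delegating the hardest rationality step -- that the eigenvalue series $Q_i^{\pm}(u)$ are expansions of polynomials and that the resulting $\bfQ$ is dominant -- to \cite{FM} is consistent with how the paper itself treats the statement.

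There is, however, one genuine gap in the portion you do argue, and it affects both your finite-dimensionality claim and the restriction formula. From centrality of the $\th_{\pm s}$ you correctly get $L(\bfQ)=\afUslC\cdot w$, so the restriction is a pseudo-highest weight $\afUslC$-module with data $\bfP$; but such a module is only guaranteed to have $\Lcp(\bfP)$ as its \emph{unique simple quotient} -- it need not itself be simple, nor finite dimensional (the Verma-type $\afUslC$-module has the same dominant data and is infinite dimensional). So Chari--Pressley's theorem cannot be applied to it as you do, and ``whence $L(\bfQ)|_{\afUslC}\cong\Lcp(\bfP)$ by part (2)'' does not yet follow; your finite-dimensionality argument silently relies on the same unproved point. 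What is missing is irreducibility of the restriction: show that every $\afUslC$-submodule $N\subseteq L(\bfQ)$ is $\afUglC$-stable. This holds because (i) all $\ttk$-weights $\mu$ of $L(\bfQ)$ satisfy $|\mu|=|\la|$ (the $\ttx^\pm_{j,s}$ with $1\leq j<n$ preserve the total degree), so the $\ttk_i$-weight spaces coincide with the $\ti\ttk_i$-weight spaces and $N$, being a sum of $\ti\ttk$-weight spaces, is $\ttk_i$-stable; and (ii) each $\ttg_{i,t}$ is a linear combination of the $\tth_{j,t}\in\afUslC$ and the central element $\th_t$ (the relevant linear system is invertible since $\ttv$ is not a root of unity), and $\th_t$ acts on $L(\bfQ)$ by a scalar, so $N$ is $\ttg_{i,t}$-stable. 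With this, $L(\bfQ)|_{\afUslC}$ is simple, hence equals $\Lcp(\bfP)$ by part (2), and finite-dimensionality of $L(\bfQ)$ follows simultaneously. A smaller omission: you never check that $L(\bfQ)$ is in fact a \emph{polynomial} representation (all $\ttk$-weights in $\mbn^n$), which is part of what the theorem classifies.
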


Let $\OgC$ (resp., $\OgnC$) be a vector space over $\mathbb C$ with basis $\{\og_i\mid i\in\mathbb Z\}$
(resp., $\{\og_i\mid 1\leq i\leq n\}$). It is a natural $\DC(n)$-module with the action
\begin{equation} \label{QGKMAlg-action}
\aligned
E_i\cdot \og_s&=\dt_{i+1,\bar s}\og_{s-1},\quad F_i\cdot \og_s=\dt_{i,\bar
s}\og_{s+1},\quad
K_i^{\pm 1}\cdot \og_s=\up^{\pm\dt_{i,\bar s}}\og_s,\\
&\sfz_t^+\cdot\og_s=\og_{s-tn},\quad\text{and }\;\;
\sfz_t^-\cdot\og_s=\og_{s+tn}.
\endaligned
\end{equation}
Hence, $\OgnC$ is a $U(n)_\mbc$-module.

 Note that  there is no weight vector in $\OgC$ which is vanished by all $E_i$.
Hence, this is not a highest weight module in the sense of \cite{Lu93}.

The Hopf algebra structure induces a $\DC(n)$-module $\OgC^{\ot r}$,  and hence, an algebra homomorphism
\begin{equation}\label{afzrC}
\afzrC:\DC(n)\lra\text{End}(\OgC^{\otimes r}).
\end{equation}
Similarly, there is an algebra homomorphism
\begin{equation}\label{zrC}
\zrC:U(n)_{\mathbb C}\lra\text{End}(\OgnC^{\otimes r}).
\end{equation}
The images $\afSrC=\text{im}(\afzrC)$ and $\SrC=\text{im}(\zrC)$ are called an affine $q$-Schur algebra and a $q$-Schur algebra, respectively.

The study of representations of affine $q$-Schur algebras \cite[Ch.~4]{DDF} shows that all finite dimensional simple $\afUglC$-modules $L(\bfQ)$
constructed above are simple $\afSrC$-modules. They are nothing but the composition factors of all tensor spaces.

\begin{Thm} Let $\sQ(n)_r=\{\bfQ\in\sQ(n)\mid r=\sum_{i=1}^n\deg Q_i(u)\}$ and let $\text{\rm Irr}(\afSrC)$ denote the set of isoclasses of simple $\afSrC$-modules (and hence, simple $\afUglC$-modules under $\afzrC$). Then $$\bigcup_{r\geq0}\text{\rm Irr}(\afSrC)=\{[L(\bfQ)]\mid \bfQ\in\sQ(n)\},$$
and $\{[L(\bfQ)]\mid \bfQ\in\sQ(n)_r\}$ is a complete set of isoclasses of simple $\afSrC$-modules. Moreover, every $L(\bfQ)$ with $\bfQ\in\sQ(n)_r$ is a quotient (equivalently, subquotient) module of $\OgC^{\ot r}$, and every composition factor of $\OgC^{\ot r}$ is isomorphic to $L(\bfQ)$ for some $\bfQ\in\sQ(n)_r$.
\end{Thm}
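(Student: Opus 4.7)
The plan is to deduce this classification from the Frenkel--Mukhin classification of finite-dimensional simple polynomial representations of $\afUglC$ (Theorem~\ref{classification of simple afUglC-modules}(1)) combined with the polynomial character of the tensor-space action. Since $\afSrC$ is by construction the image of $\afzrC$, any simple $\afSrC$-module pulls back through $\afzrC$ to a simple $\DC(n)$-module and, via the Hopf algebra isomorphism $f$ of Theorem~\ref{DDFIsoThm}, yields a simple $\afUglC$-module; this explains the parenthetical identification in the statement and reduces matters to singling out the correct $L(\bfQ)$'s.

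First I would establish the ``downward'' direction: every composition factor of $\OgC^{\otimes r}$ is isomorphic to $L(\bfQ)$ for some $\bfQ\in\sQ(n)_r$. The action formulas \eqref{QGKMAlg-action} show that each basis vector $\og_s$ is a weight vector of weight $\epsilon_{\bar s}\in\mbn^n$, so $\OgC^{\otimes r}$ is a polynomial $\afUglC$-module of type~$1$ whose weights $\la$ all satisfy $|\la|=r$ and whose weight spaces are finite-dimensional. Any composition factor is therefore a finite-dimensional simple polynomial representation, and Theorem~\ref{classification of simple afUglC-modules}(1) identifies it with some $L(\bfQ)$. Since the highest weight of $L(\bfQ)$ is $(\deg Q_1(u),\ldots,\deg Q_n(u))$, the constraint $|\la|=r$ forces $\sum_i\deg Q_i(u)=r$, i.e., $\bfQ\in\sQ(n)_r$.

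The main obstacle is the opposite direction: every $L(\bfQ)$ with $\bfQ\in\sQ(n)_r$ actually arises as a (sub)quotient of $\OgC^{\otimes r}$. My plan is to invoke the affine Schur--Weyl duality developed in the next section, under which $\afSrC$ and the affine Hecke algebra $\afHrC$ act on $\OgC^{\otimes r}$ with mutually centralizing images. Under this duality, simple $\afSrC$-subquotients of $\OgC^{\otimes r}$ correspond to simple $\afHrC$-subquotients of the same space; the latter are classified by the Zelevinsky-type multisegment parameterization for simple modules of $\afHrC$, and composing with the bijection between multisegments of size $r$ and $\sQ(n)_r$ worked out in \cite[Ch.~4]{DDF} produces each $L(\bfQ)$ as the required (sub)quotient. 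This is the technical heart of the argument.

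Finally, since $\OgC^{\otimes r}$ is a faithful $\afSrC$-module and $\afSrC$ is finite-dimensional, every simple $\afSrC$-module appears as a composition factor of $\OgC^{\otimes r}$; combined with the two directions above and with standard lifting of simple subquotients to quotients in a finite-dimensional algebra, this yields both the bijection $\mathrm{Irr}(\afSrC)\leftrightarrow\sQ(n)_r$ and the equivalence of ``quotient'' and ``subquotient''. Taking the union over $r\geq 0$ then recovers all of $\sQ(n)$ and completes the classification.
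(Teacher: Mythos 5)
There is a genuine gap, and it comes from two finiteness claims that are simply false in the affine setting. First, you assert that the weight spaces of $\OgC^{\ot r}$ are finite dimensional; they are not. By \eqref{QGKMAlg-action} the weight of $\og_s$ depends only on $\bar s\in\mbz/n\mbz$, so $\og_1^{\ot r}$, $\og_{1+n}\og_1^{\ot(r-1)}$, $\og_{1+2n}\og_1^{\ot(r-1)},\dots$ all lie in the same weight space. Consequently your ``downward'' step does not show that a composition factor of $\OgC^{\ot r}$ is finite dimensional, and without finite dimensionality you cannot invoke Theorem~\ref{classification of simple afUglC-modules}(1) to identify it with some $L(\bfQ)$. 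Finite dimensionality of simple $\afSrC$-modules is a nontrivial fact (the paper quotes \cite[4.1.6]{DDF}, proved via central characters/finiteness over the centre, as for affine Hecke algebras), not something you can read off from the weight structure of the tensor space.

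Second, your final step rests on ``$\afSrC$ is finite-dimensional'', which is false: $\afSrC=\text{\rm End}_{\afHrC}(\OgC^{\ot r})$ has a basis indexed by an infinite set of affine matrices, so the standard argument ``a faithful module over a finite-dimensional algebra has every simple as a composition factor'' is unavailable, and so is your ``standard lifting of subquotients to quotients''. This is exactly where the real content lies: the paper's proof cites \cite[4.6.2]{DDF} for the statement that \emph{every} finite-dimensional $\afSrC$-module is a homomorphic image of $\OgC^{\ot r}$ (which yields both ``every simple occurs'' and the equivalence of ``quotient'' and ``subquotient''), together with \cite[4.5.8]{DDF} for the classification itself and \cite[4.1.6]{DDF} for finite dimensionality. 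Your middle paragraph (Schur--Weyl duality plus the multisegment parameterization) is in the right spirit and is indeed how \cite[Ch.~4]{DDF} proceeds, but as written it defers the technical heart to the same source while the surrounding finiteness arguments that were supposed to glue it into the theorem do not hold.
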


\begin{proof} The first assertion is the classification theorem of simple $\afSrC$-modules (see \cite[4.5.8]{DDF}). The second assertion follows from the fact that every finite dimensional $\afSrC$-module is a homomorphic image of $\OgC^{\ot r}$ (\cite[4.6.2]{DDF}). The last assertion is clear since every composition factor of $\OgC^{\ot r}$ is an $\afSrC$-module and every simple $\afSrC$-module is finite dimensional \cite[4.1.6]{DDF}.
\end{proof}

\section{Affine $q$-Schur algebras and their simple representations}

Let $\mathfrak S_{\vtg,r}$ be the {\it affine symmetric group} consisting of all permutations
$w:\mbz\ra\mbz$ such that $w(i+r)=w(i)+r$ for $i\in\mbz$. Then, $\mathfrak S_{\vtg,r}\cong\fS_r\ltimes \mbz^r$, where $\fS_r$ is the symmetric group on $r$ letters. Let $\La(n,r)=\{\la\in\mbn^n\mid r=|\la|\}$ be the set of compositions of $r$ into $n$ parts and, for  $\la\in\La(n,r)$, let $\fS_\la$ be the Young subgroup of $\fS_r$ (or of $\fS_{\vtg,r}$).

Let $\La^+(n,r)$ be the subset of partitions (i.e.~ weakly decreasing compositions) in $\La(n,r)$. Thus, $\La^+(r)=\La^+(r,r)$ is the set of all partitions of $r$. For partition $\la$, let $\la'$ be the {\it dual} partition of $\la$ (so $\la'_i=\#\{j\mid\la_j\geq i\}$).

 Let $\afHrC$ be the Hecke algebra associated with $\mathfrak S_{\vtg,r}$. Thus, $\afHrC$ has a presentation with generators
$T_i,X_j$ $(\text{$i=1,\ldots,r-1$, $j=1,\ldots,r$})$
 and relations
$$\aligned
 & (T_i+1)(T_i-\up^2)=0,\\
 & T_iT_{i+1}T_i=T_{i+1}T_iT_{i+1},\;\;T_iT_j=T_jT_i\;(|i-j|>1),\\
 & X_iX_i^{-1}=1=X_i^{-1}X_i,\;\; X_iX_j=X_jX_i,\\
 & T_iX_iT_i=\up^2 X_{i+1},\;\;X_jT_i=T_iX_j\;(j\not=i,i+1).
\endaligned$$
Let $\HrC$ be the subalgebra generated by all $T_i$. This is the Hecke algebra of $\fS_r$.

Following \cite{VV}, the tensor space $\OgC^{\ot r}$
admits a right $\afHrC$-module structure defined by
\begin{equation}\label{VVaction}
\begin{cases}
\og_{\bf i}\cdot X_t^{-1}
=\og_{i_1}\cdots\og_{i_{t-1}}\og_{i_t+n}\og_{i_{t+1}}\cdots\og_{i_r},\qquad \text{ for all }\bfi\in \mbz^r\\
{\og_{\bf i}\cdot T_k=\left\{\begin{array}{ll} \up^2\og_{\bf
i},\;\;&\text{if $i_k=i_{k+1}$;}\\
q\og_{\bfi s_k},\;\;&\text{if $i_k<i_{k+1}$;}\qquad\text{ for all }\bfi\in I(n,r),\\
q\og_{\bfi s_k}+(\up^2-1)\og_{\bf i},\;\;&\text{if
$i_{k+1}<i_k$,}
\end{array}\right.}
\end{cases}
\end{equation}
where $1\leq k\leq r-1$, $1\le t\le r$, and the action of $\fS_r$ on $I(n,r):=[1,n]^r$ is the place permutation. Apparently, this also defines an action of $\HrC$ on $\OgnC^{\ot r}$.

The formulas of the comultiplication on $\sfz_t^\pm$ \eqref{Hopf} and the first relation in \eqref{VVaction} implies immediately the following.

\begin{Lem}[{\cite[(3.5.5.2)]{DDF}}]\label{action central elts} For any $\bfi=(i_1,\ldots,i_r)\in\mbz^r$,
\begin{equation*}
 \aligned
\sfz_t^+\cdot\og_\bfi&=\sum_{s=1}^r \og_{i_1}\ot\cdots\ot\og_{i_{s-1}}\ot\og_{i_s-tn}\ot\og_{i_{s+1}}\ot\cdots\ot\og_{i_r}=\og_\bfi \sum_{i=1}^rX_i^t,\;\;\text{and}\\
\sfz_t^-\cdot\og_\bfi&=\sum_{s=1}^r \og_{i_1}\ot\cdots\ot
\og_{i_{s-1}}\ot\og_{i_s+tn}\ot\og_{i_{s+1}}\ot\cdots\ot\og_{i_r}=\og_\bfi \sum_{i=1}^rX_i^{-t}.
\endaligned
\end{equation*}
\end{Lem}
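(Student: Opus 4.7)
The plan is to verify the two equalities separately. The middle equality rewrites $\sfz_t^\pm\cdot\og_\bfi$ as an explicit sum of pure tensors directly from the Hopf structure of $\DC(n)$, while the last equality translates that sum into the $\afHrC$-language via the explicit action in \eqref{VVaction}.

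For the middle equality, I would exploit the primitivity of $\sfz_t^\pm$ recorded in Definition~\ref{presentation dHallAlg}, namely $\Delta(\sfz_t^\pm)=\sfz_t^\pm\otimes 1+1\otimes \sfz_t^\pm$. A straightforward induction on $r$ based on coassociativity yields the iterated comultiplication
\[
\Delta^{(r-1)}(\sfz_t^\pm)=\sum_{s=1}^{r}1^{\otimes(s-1)}\otimes \sfz_t^\pm\otimes 1^{\otimes(r-s)}\in\DC(n)^{\otimes r}.
\]
Applying this to the pure tensor $\og_\bfi=\og_{i_1}\otimes\cdots\otimes\og_{i_r}$ and using the single-factor actions $\sfz_t^+\cdot\og_j=\og_{j-tn}$ and $\sfz_t^-\cdot\og_j=\og_{j+tn}$ supplied in \eqref{QGKMAlg-action} immediately produces the asserted sum, with the shift $\mp tn$ appearing only in the $s$-th tensor slot.

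For the last equality, I would iterate the formula for $\og_\bfi\cdot X_s^{-1}$ from \eqref{VVaction}. Since $X_1,\dots,X_r$ pairwise commute, applying $X_s^{-1}$ exactly $t$ times shows that $\og_\bfi\cdot X_s^{-t}$ is obtained from $\og_\bfi$ by replacing the $s$-th entry $i_s$ with $i_s+tn$; summing over $s$ then matches the $\sfz_t^-$-formula above. The $\sfz_t^+$ case is symmetric: because $X_s^{t}$ is the two-sided inverse of $X_s^{-t}$, its action on $\og_\bfi$ must replace $i_s$ by $i_s-tn$, and summing over $s$ gives the $\sfz_t^+$-formula.

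I do not anticipate a substantive obstacle; the argument is essentially bookkeeping. The only point requiring care is to keep the directions of the shifts $\pm tn$ in \eqref{QGKMAlg-action} aligned with the $\pm$ signs of the exponents of the $X_s$ in \eqref{VVaction}, which is precisely why the statement pairs $\sfz_t^+$ with $\sum_{i}X_i^{t}$ and $\sfz_t^-$ with $\sum_{i}X_i^{-t}$.
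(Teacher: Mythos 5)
Your argument is correct and is exactly the route the paper takes: the paper states the lemma as an immediate consequence of the primitivity formula $\Delta(\sfz_t^\pm)=\sfz_t^\pm\otimes1+1\otimes\sfz_t^\pm$ from \eqref{Hopf} together with the first relation in \eqref{VVaction}, which is precisely your iterated-comultiplication plus $X_s^{\mp t}$-bookkeeping argument. No gaps; the sign/direction check you flag is indeed the only point of care, and you have it right.
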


We have the following generalization of the fact $\SrC=\text{End}_{\HrC}(\OgnC^{\ot r})$ for $q$-Schur algebras.

\begin{Thm}[{\cite[3.8.1]{DDF}}] \label{onto} The actions of $\DC(n)$ and $\afHrC$ commute and
$$\afSrC=\text{\rm End}_{\afHrC}(\OgC^{\ot r})\; (n\geq2,r\geq1)\;\text{ and }\;\afHrC\cong \text{\rm End}_{\afSrC}(\OgC^{\ot r})^{\rm op}\;(n\geq r).$$
\end{Thm}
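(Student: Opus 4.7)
\medskip
\noindent\textbf{Proof plan.} The argument splits into three tasks: (i) verify that the actions of $\DC(n)$ and $\afHrC$ commute; (ii) establish the identification $\afSrC=\End_{\afHrC}(\OgC^{\ot r})$; (iii) derive the reverse duality $\afHrC\cong\End_{\afSrC}(\OgC^{\ot r})^{\mathrm{op}}$ under the hypothesis $n\geq r$.

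For (i), I would check commutation on generators, dividing into cases. The restriction to $U(n)_\mbc\subset\DC(n)$ and $\HrC\subset\afHrC$ is precisely the classical Jimbo quantum Schur--Weyl duality, since \eqref{VVaction} restricts blockwise (in residue classes of $\bfi\bmod n$) to the coproduct-built action on $\OgnC^{\ot r}$. The affine Chevalley generators $E_n, F_n$ commute with $T_k$ by a parallel direct check using the cyclic residue rule in \eqref{QGKMAlg-action}; the Laurent generators $X_j^{\pm1}$ shift some $i_k$ by $\pm n$, leaving residues $\bar i_k\in I$ and relative order unchanged, and so commute with every $K_i, E_i, F_i$. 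The decisive input for the central generators is Lemma~\ref{action central elts}, which identifies $\sfz_t^{\pm}$ with the action of the power sum $\sum_{k=1}^r X_k^{\pm t}$; these symmetric Laurent polynomials lie in the Bernstein center of $\afHrC$ and therefore commute with every $T_k$ and every $X_j$. Relations (QGL7), (QGL8) then handle commutation of $\sfz_t^\pm$ with the remaining $\DC(n)$-generators.

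For (ii), the inclusion $\afSrC\subseteq\End_{\afHrC}(\OgC^{\ot r})$ is automatic from (i); the content is the reverse inclusion, which I would establish by matching explicit bases. The algebra $\afSrC$ carries a Lusztig-type basis indexed by the set $\afThnr$, available from the double Ringel--Hall presentation of $\DC(n)$ recalled in \S2. Correspondingly, the right $\afHrC$-module $\OgC^{\ot r}$ decomposes as $\bigoplus_{[\bfi]}\og_\bfi\cdot\afHrC$ over $\fS_{\vtg,r}$-orbits in $\mbz^r$, each summand a permutation-type module; a Mackey-style computation of intertwiners between these summands yields a basis of $\End_{\afHrC}(\OgC^{\ot r})$ indexed by the same combinatorial set $\afThnr$. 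It then remains to check that the image of $\afzrC$ realises each basis element, which is done by exhibiting explicit preimages.

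For (iii), I would bootstrap from Jimbo's finite-type isomorphism $\HrC\cong\End_{\SrC}(\OgnC^{\ot r})^{\mathrm{op}}$ (valid for $n\geq r$) via the $\mbz^r$-grading $\OgC^{\ot r}=\bigoplus_{\bfi\in\mbz^r}\mbc\og_\bfi$: the $X_j$'s shift degrees in individual tensor factors, while all of $\afSrC$ preserves residue strata and within the stratum meeting $[1,n]^r$ reduces to the action of $\SrC$ on $\OgnC^{\ot r}$. The finite duality supplies that stratum, and $X$-monomials in $\afHrC$ propagate the conclusion to every other stratum, so any $\afSrC$-endomorphism must agree with an element of $\afHrC$. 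The main obstacle throughout is the surjectivity in (ii): because $\OgC^{\ot r}$ is infinite-dimensional a naive dimension count is unavailable, so everything rests on first having the $\afThnr$-labelled basis of $\afSrC$ from the Ringel--Hall construction and then realising that basis concretely through the combinatorics of the right $\afHrC$-action.
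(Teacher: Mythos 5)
First, note that the paper does not prove this statement at all: it is quoted verbatim from \cite[3.8.1]{DDF}, so there is no internal proof to match; your proposal has to stand on its own, and as it stands it has two genuine gaps. The first and most serious is in step (ii). The entire content of the equality $\afSrC=\End_{\afHrC}(\OgC^{\ot r})$ is the surjectivity of $\afzrC$ onto the endomorphism algebra, and your plan begs this question: the statement that ``$\afSrC$ carries a Lusztig-type basis indexed by $\afThnr$, available from the double Ringel--Hall presentation'' is circular, since the image of $\afzrC$ is only known to have such a basis \emph{after} surjectivity is proved (the $\afThnr$-indexed basis lives naturally on $\End_{\afHrC}(\oplus_\la x_\la\afHrC)$, via your Mackey computation, not on the image of $\DC(n)$). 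Likewise ``exhibiting explicit preimages'' of each basis element is precisely the hard part and cannot be done naively; the actual argument in \cite{DDF} runs through the triangular decomposition of $\DC(n)$, multiplication formulas for semisimple generators, surjectivity of the Hall-algebra (positive/negative) parts in the spirit of Varagnolo--Vasserot, and crucially the extra central generators $\sfz_s^\pm$ to reach the $0$-part --- this is exactly the point where the affine case differs from the finite one, since the Chevalley generators $E_i,F_i,K_i^{\pm1}$ alone do \emph{not} generate $\afSrC$.

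The second gap is in step (iii). Your mechanism rests on the claim that ``all of $\afSrC$ preserves residue strata and within the stratum meeting $[1,n]^r$ reduces to the action of $\SrC$,'' but this is false: by \eqref{QGKMAlg-action} the generators act by $E_i\cdot\og_s=\dt_{i+1,\bar s}\og_{s-1}$ and $F_i\cdot\og_s=\dt_{i,\bar s}\og_{s+1}$, so they shift indices by $\pm1$ and change residues, and $\OgnC^{\ot r}$ is not an $\afSrC$-stable subspace (e.g.\ $F_n\cdot\og_n=\og_{n+1}$). Consequently one cannot ``propagate'' Jimbo's finite duality stratum by stratum. The standard correct route for $n\geq r$ is instead to use the weight $\og=(1^r,0^{n-r})$: the corresponding weight space of $\OgC^{\ot r}$ is $\og_{(1,2,\ldots,r)}\cdot\afHrC\cong\afHrC$ as a right module, which gives faithfulness of the $\afHrC$-action, and evaluation of $\afSrC$-endomorphisms on this weight space (which is cut out by an idempotent of $\afSrC$, hence preserved) yields the isomorphism $\afHrC\cong\End_{\afSrC}(\OgC^{\ot r})^{\rm op}$. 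Your step (i) is essentially fine (the Bernstein-center observation for $\sum_jX_j^{\pm t}$ is the right input for $\sfz_t^\pm$, and the generator-by-generator check works), though the appeal to (QGL7)--(QGL8) is beside the point there, as those are internal relations of $\DC(n)$ rather than statements about the commuting right action.
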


Let
$$x_\la:=\sum_{w\in\fS_\la}T_w.$$ Then, there are $\afHrC$-module and $\HrC$-module isomorphisms: $\OgC^{\ot r}\cong \oplus_{\la\in\La(n,r)}x_\la\afHrC$
and $\OgnC^{\ot r}\cong \oplus_{\la\in\La(n,r)}x_\la\HrC$, which induce algebra isomorphisms:
$$\afSrC\cong\text{End}_{\afHrC}(\oplus_{\la\in\La(n,r)}x_\la\afHrC),\quad \SrC\cong\text{End}_{\HrC}(\oplus_{\la\in\La(n,r)}x_\la\HrC).$$

Like the $q$-Schur algebras, Theorem \ref{onto} implies that affine $q$-Schur algebras play a bridging role between representations of quantum affine $\mathfrak {gl}_n$ and affine Hecke algebras. This becomes possible since we have established
the isomorphism $f$ in Theorem \ref{DDFIsoThm} between double Ringel-Hall algebras $\DC(n)$ and the quantum loop algebra $\afUglC$. We now describe how simple $\afSrC$-modules arise from simple $\afHrC$-modules.

A {\it segment} with center $a\in\mbc^*$ and length $k$ is by definition an
ordered sequence
$$[a;k)=(a\ttv^{-k+1},a\ttv^{-k+3},\ldots,a\ttv^{k-1})\in(\mbc^*)^k.$$
A {\it multisegment} is an unordered collections of segments, denoted by formal sum
$$\sum_{i=1}^p[a_i;\nu_i)=[a_1;\nu_1)+\cdots+[a_p;\nu_p),$$
where, possibly, $[a_i;\nu_i)=[a_j;\nu_j)$ for $i\neq j$.

 Let $\mathscr S_r$ be the set of all multisegments of total length $r$:
$$\mathscr S_r=\{[a_1;\nu_1)+\cdots+[a_p;\nu_p)\mid a_i\in\mbc^*, p,\nu_i\geq1, r=\Sigma_i\nu_i\}.$$

For $\bfs=\sum_{i=1}^p[a_i;\nu_i)\in\mathscr S_{r}$, let
$$(s_1,s_2,\ldots,s_r)\in(\mbc^*)^r$$
be the $r$-tuple obtained by juxtaposing the segments in $\bfs$ and let
$J_\bfs$ be the left ideal of $\afHrC$ generated by $X_j-s_j$
for $1\leq j\leq r$.
Then $M_\bfs=\afHrC/J_\bfs$ is a left $\afHrC$-module\footnote{Strictly speaking, the module $M_\bfs$ depends on the order of the segments in $\bfs$. However, the module $V_\bfs$ below does not.} which as an $\sH(r)_\mbc$-module
is isomorphic to the regular representation of $\sH(r)_\mbc$.

Let $\nu=(\nu_1,\ldots,\nu_p)$. After reordering, we may assume that $\nu$ is a partition. Then, the element
$$y_{\nu}=\sum_{w\in\fS_\nu}(-\ttv^2)^{-\ell(w)}T_w\in\afHrC$$
generates the submodule $\afHrC \bar y_\nu$ of $M_\bfs$
which, as an $\sH(r)_\mbc$-module, is isomorphic to $\sH(r)_\mbc
y_\nu$. For each partition $\la$ of $r$, let $E_\la$ be the left cell module defined by the
Kazhdan--Lusztig's C-basis \cite{KL79} associated with the left cell
containing $w_{0,\la}$, the longest element of the Young subgroup $\fS_\la$. Then, as an $\sH(r)_\mbc$-module,
\begin{equation}\label{signed permutation module}
\sH(r)_\mbc y_\nu\cong E_\nu\oplus(\bigoplus_{\mu\vdash r,
\mu\rhd\nu}m_{\mu,\nu}E_\mu).
\end{equation}

Let $V_\bfs$ be the unique composition factor of the $\afHrC$-module
$\afHrC \bar y_\nu$ such that the multiplicity of $E_\nu$ in
$V_\bfs$ as an $\sH(r)_\mbc$-module is nonzero. Note that, if $V_\bfs$ is $\HrC$-irreducible, then
$V_\bfs=E_\nu$. We will use this fact in the last section.

We now can state the following classification theorem due to
Zelevinsky and Rogawski. The
construction above follows \cite{Ro}.

\begin{Thm}\label{classification irr affine Hecke algebra}
Let $\text{\rm Irr}(\afHrC)$ be the set of isoclasses of all simple
$\afHrC$-modules. Then the correspondence $\bfs\mapsto [V_\bfs]$
defines a bijection from $\mathscr S_r$ to $\text{\rm Irr}(\afHrC)$.
\end{Thm}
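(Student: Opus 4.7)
The plan is to follow the Bernstein--Zelevinsky--Rogawski strategy: every finite-dimensional simple $\afHrC$-module is controlled by its central character (the collective action of the symmetric polynomials in $X_1,\ldots,X_r$) together with its restriction to $\HrC$, and the data of a multisegment $\bfs\in\mathscr S_r$ records both pieces of information simultaneously. The Bernstein presentation gives a tensor decomposition $\afHrC\cong\HrC\ot\mbc[X_1^{\pm1},\ldots,X_r^{\pm1}]$ as $\mbc$-vector spaces, so every simple $\afHrC$-module is a quotient of some principal series module $M_\bfs$.

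First I would verify that $V_\bfs$ is well-defined and nonzero. From the $\HrC$-isomorphism $\afHrC\bar y_\nu\cong\HrC y_\nu$ and \eqref{signed permutation module}, the left cell module $E_\nu$ appears in $\afHrC\bar y_\nu|_{\HrC}$ with multiplicity one, while every other $\HrC$-composition factor $E_\mu$ satisfies $\mu\rhd\nu$. Since $\afHrC\bar y_\nu$ is finite dimensional over $\mbc$, it has finite length as an $\afHrC$-module, and by the triangularity just noted, exactly one of its $\afHrC$-composition factors contains $E_\nu$; this singles out $V_\bfs$ unambiguously and independently of the ordering of the segments in $\bfs$.

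Next I would establish injectivity. If $V_\bfs\cong V_{\bfs'}$, equality of central characters forces the unordered multisets $\{s_1,\ldots,s_r\}=\{s'_1,\ldots,s'_r\}$. The partition $\nu$ is then recoverable from $V_\bfs$ alone as the dominance-minimal $\mu\vdash r$ with $[V_\bfs|_{\HrC}:E_\mu]\ne0$, so $\nu=\nu'$. Given $\nu$ and the multiset of eigenvalues, the segments $[a_i;\nu_i)$ are forced by a combinatorial reconstruction: one greedily groups the eigenvalues into arithmetic progressions of lengths prescribed by the parts of $\nu$, and uniqueness of the resulting multisegment follows from the fact that any two decompositions of the same multiset into segments of the same lengths must agree up to reordering.

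The main obstacle is surjectivity. Given a simple $V\in\mathrm{Irr}(\afHrC)$, pick $\nu\vdash r$ minimal in the dominance order with $\Hom_{\HrC}(E_\nu,V|_{\HrC})\ne0$, and choose a simultaneous generalized eigenvector $v\in V$ for $X_1,\ldots,X_r$ with eigenvalues $(s_1,\ldots,s_r)$. Frobenius reciprocity then produces a surjective $\afHrC$-map from a suitable parabolically induced module onto $V$, and minimality of $\nu$ identifies $V$ with some $V_\bfs$ provided that the eigenvalues $s_j$ actually cluster into honest segments of the lengths prescribed by $\nu$. This clustering is the technical heart of Rogawski's argument: using the relation $T_iX_iT_i=\ttv^2X_{i+1}$, one shows that any two eigenvalues whose ratio is $\ttv^{\pm2}$ must appear as adjacent terms of a common segment, for otherwise intertwining operators on the induced module would produce an $\HrC$-submodule of the form $E_\mu$ with $\mu\not\rhd\nu$, contradicting the minimality of $\nu$. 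Once this segment structure is secured, the bijection $\bfs\leftrightarrow[V_\bfs]$ is established.
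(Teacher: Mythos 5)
You should first note that the paper does not prove this theorem at all: it is quoted as the classification due to Zelevinsky and Rogawski, with the construction of $V_\bfs$ following \cite{Ro}, so your sketch has to stand on its own. As an outline of the Bernstein--Zelevinsky--Rogawski strategy it points in the right direction, but it contains a concretely false step in the injectivity argument. You claim that a simple module is pinned down by its central character (the multiset $\{s_1,\ldots,s_r\}$) together with the dominance-minimal partition $\nu$ occurring in its $\HrC$-restriction, because ``any two decompositions of the same multiset into segments of the same lengths must agree up to reordering.'' This is not true. Take $r=7$, fix $b\in\mbc^*$, and compare
$\bfs=[b\ttv^4;3)+[b\ttv^3;2)+[b;1)+[b;1)$, whose segments are $(b\ttv^2,b\ttv^4,b\ttv^6)$, $(b\ttv^2,b\ttv^4)$, $(b)$, $(b)$, with
$\bfs'=[b\ttv^2;3)+[b\ttv;2)+[b\ttv^4;1)+[b\ttv^6;1)$, whose segments are $(b,b\ttv^2,b\ttv^4)$, $(b,b\ttv^2)$, $(b\ttv^4)$, $(b\ttv^6)$.
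Both have underlying multiset $\{b,b,b\ttv^2,b\ttv^2,b\ttv^4,b\ttv^4,b\ttv^6\}$ and both have length partition $\nu=(3,2,1,1)$, yet $\bfs\neq\bfs'$ in $\mathscr S_7$. So the two invariants you propose to extract from $V_\bfs$ cannot distinguish $V_\bfs$ from $V_{\bfs'}$, and injectivity does not follow from your argument; separating such pairs is exactly where the genuine content of Zelevinsky's and Rogawski's work lies (finer use of restriction/Jacquet-type functors or Kazhdan--Lusztig/intertwining-operator methods), not a greedy regrouping of eigenvalues. (Incidentally, the entries of a segment form a geometric progression with ratio $\ttv^2$, not an arithmetic one.)

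The remaining steps are also asserted rather than proved: the independence of $V_\bfs$ from the ordering of the segments (the paper's own footnote stresses that $M_\bfs$ does depend on the order, so this needs an argument), and the surjectivity ``clustering'' step, which you explicitly delegate to ``the technical heart of Rogawski's argument''; moreover the clustering claim as you state it (any two eigenvalues in ratio $\ttv^{\pm2}$ lie adjacently in a common segment) is not a correct statement, as the example above already shows. In short, the proposal reproduces the known framework but the one step you argue in detail fails, and the hard steps are cited rather than supplied; for this theorem the appropriate course in this paper is what the authors do, namely invoke Zelevinsky and Rogawski.
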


 Suppose $n>r$, we define a map
\begin{equation}\label{Q_s}
\partial:\ms S_r\lra\sQ(n)_r,\quad \bfs\longmapsto\bfQ_\bfs=(Q_1(u),\ldots,Q_n(u))
\end{equation}
as follows: for $\bfs=\sum_{i=1}^p[a_i;\nu_i)\in\ms S_r$,
let $Q_n(u)=1$ and, for $1\leq i\leq n-1$, define $$Q_i(u)=P_i(u\ttv^{-i+1})P_{i+1}(u\ttv^{-i+2})\cdots
P_{n-1}(u\ttv^{n-2i}),$$ where
$
P_{i}(u)=\prod_{1\leq j\leq p\atop \nu_j=i}(1-a_ju).$

Here $\nu=(\nu_1,\ldots,\nu_p)$ is a partition of $r$. If $\mu_i:=\deg P_i(u)=\#\{j\in[1,p]\mid \nu_j=i\}$ and $\la_i:=\deg Q_i(u)=\#\{j\in[1,p]\mid \nu_j\geq i\}$, then $\la=(\la_1,\ldots,\la_{n-1}, \la_n)=\nu'$, the dual partition of $\nu$, and
$\la_i-\la_{i+1}=\mu_i$ for all $1\leq i<n$.

This map is a bijection which gives the following identification theorem; see \cite[\S\S4.3-5]{DDF}. 

\begin{Thm}\label{n>r representation}
Assume $n>r$. Then we have $\afSrC$-module isomorphisms
$\OgC^{\ot r}\ot_{\afHrC}V_\bfs\cong L(\bfQ_\bfs)$ for all
$\bfs\in\ms S_r$. Furthermore, for any $n$ and $r$,
the set
$$\{\OgC^{\ot r}\ot_{\afHrC}V_\bfs\mid\bfs=[a_1;\nu_1)+\cdots+[a_p;\nu_p)
\in\ms S_r,\,p\geq 1,\,\nu_i\leq n,\,\forall i\}$$ forms a complete set of nonisomorphic simple $\afSrC$-modules.
\end{Thm}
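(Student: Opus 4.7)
The plan is to prove the stable case $n>r$ first via the Schur--Weyl-type duality of Theorem~\ref{onto}, and then to deduce the general case by a truncation argument together with the classification of simple $\afSrC$-modules established at the end of \S3.

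For $n>r$, the double centralizer $\afSrC=\End_{\afHrC}(\OgC^{\ot r})$ and $\afHrC\cong\End_{\afSrC}(\OgC^{\ot r})^{\mathrm{op}}$ makes $\OgC^{\ot r}$ a balanced $(\afSrC,\afHrC)$-bimodule. By standard double centralizer arguments, the functor $F(-)=\OgC^{\ot r}\ot_{\afHrC}(-)$ sends each simple $\afHrC$-module either to zero or to a simple $\afSrC$-module, and since every simple $\afSrC$-module is a quotient of $\OgC^{\ot r}$ (the classification at the end of \S3), every such simple arises as some $F(V_\bfs)$. A cardinality check via the bijection $\partial\colon\ms S_r\to\sQ(n)_r$ of~\eqref{Q_s}---which is a bijection when $n>r$ because every partition of $r$ has fewer than $n$ nonzero parts---then forces $F(V_\bfs)$ to be nonzero and hence simple for every $\bfs\in\ms S_r$.

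The identification $F(V_\bfs)\cong L(\bfQ_\bfs)$ proceeds by exhibiting a pseudo-highest weight vector, taken as a tensor $\og_{j_1}\ot\cdots\ot\og_{j_r}\ot\bar1_{V_\bfs}$ whose indices come from juxtaposing the segments of $\bfs$ in decreasing order, so that each $\ttx_{k,t}^+$ annihilates its image by the weight-space structure of $\OgC^{\ot r}$. The Drinfeld polynomials are then recovered from the eigenvalues of $\ms Q_{i,\pm s}$ on this vector. Via the isomorphism $f$ of Theorem~\ref{DDFIsoThm} these eigenvalues are governed by the central elements $\sfz_t^\pm$, and Lemma~\ref{action central elts} identifies the action of $\sfz_t^\pm$ on $\OgC^{\ot r}$ with that of the symmetric power sum $\sum_j X_j^{\pm t}$. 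Since $X_j$ acts on $V_\bfs$ by the juxtaposed scalar $s_j$, the eigenvalue of $\sfz_t^\pm$ is $\sum_j s_j^{\pm t}$, and substituting this into the exponential formula~\eqref{scrQ} produces precisely the product $\prod_{\nu_k=i}(1-a_ku)$ that defines $\bfQ_\bfs$ in~\eqref{Q_s}.

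For arbitrary $n$ and $r$, the classification at the end of \S3 gives $\{[L(\bfQ)]\mid \bfQ\in\sQ(n)_r\}$ as a complete list of simple $\afSrC$-modules, and a short combinatorial check shows that $\partial$ restricts to a bijection from $\{\bfs\in\ms S_r\mid \nu_i\leq n\text{ for all }i\}$ onto $\sQ(n)_r$: the condition $\nu_i\leq n$ is equivalent to the dual partition $\nu'$ having at most $n$ parts, which is precisely the condition $\bfQ_\bfs\in\sQ(n)_r$. To extend the identification $F(V_\bfs)\cong L(\bfQ_\bfs)$ from the stable range to arbitrary $n$, I would use an idempotent truncation functor from modules of the affine $q$-Schur algebra for some $n'>r$ down to $\afSrC$-modules: applying it to the already-established isomorphism at level $n'$ yields the isomorphism at level $n$, with non-vanishing of the truncated module guaranteed exactly by the constraint $\nu_i\leq n$. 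The main obstacle throughout is to separate each individual $\ttg_{i,\pm s}$-eigenvalue from the symmetric combination $\th_{\pm s}=\mp\frac{1}{[s]_q}\sum_i\ttg_{i,\pm s}$ that Theorem~\ref{DDFIsoThm} directly provides; this is resolved using that the pseudo-highest weight vector carries a definite $\mbz^n$-weight so that the relations (QLA2)--(QLA3) isolate each $\ttg_{i,\pm s}$-contribution, making the generating function~\eqref{scrQ} for $\ms Q_i^\pm(u)$ computable term by term.
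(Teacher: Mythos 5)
Your proposal has to stand on its own here, since the paper does not prove Theorem~\ref{n>r representation} but cites [DDF, \S\S4.3--4.5] for it; measured that way, two of your key steps have genuine gaps. First, the simplicity and nonvanishing of $\OgC^{\ot r}\ot_{\afHrC}V_\bfs$ for $n>r$ is not a ``standard double centralizer'' consequence of Theorem~\ref{onto}: the statement that simples go to simples or zero is a property of the idempotent/Hom-type Schur functor $M\mapsto eM$, not of its left adjoint $V\mapsto \afSrC e\ot_{e\afSrC e}V\cong\OgC^{\ot r}\ot_{\afHrC}V$, which in general only has a simple head (the unique quotient not killed by $e$). Your cardinality check cannot repair this: $\ms S_r$ and $\sQ(n)_r$ are uncountably infinite, so a bijection of parameter sets forces neither $F(V_\bfs)\neq0$ nor simplicity of each $F(V_\bfs)$. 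For $n>r$ this is a real theorem (Chari--Pressley's equivalence between finite-dimensional $\afHrC$-modules and $\afSrC$-modules, reproved in [DDF, Ch.~4]), and it must be invoked or proved, not deduced formally.

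Second, the identification of the Drinfeld polynomials is not achieved by your argument. The central elements $\sfz^\pm_t$ correspond under $f$ only to $\sum_i\ttg_{i,\pm t}$, so together with Lemma~\ref{action central elts} they determine only the product $\prod_iQ_i(u)=\prod_j(1-s_ju)$, never the individual $Q_i(u)$; and your device for separating the $\ttg_{i,\pm s}$-eigenvalues --- that the vector has a definite $\mbz^n$-weight so that (QLA2)--(QLA3) ``isolate each contribution'' --- does not work, because the weight only fixes the eigenvalues of the $\ttk_i$, while (QLA2) merely says $[\ttk_i,\ttg_{j,s}]=0$ and (QLA3) moves the vector by $\ttx^\pm$; the eigenvalues of $\ttg_{i,s}$, $s\neq0$, on a pseudo-highest weight vector are exactly the extra Drinfeld data and are not functions of the weight. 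Indeed this separation is the hard point of the whole paper: even for the special evaluation modules, the individual $Q_i$ are obtained in Theorem~\ref{L(la)a} only by combining the central-element computation with the $P_j$'s from Theorem~\ref{compute P(u)}, which in turn costs the evaluation-map compatibility of \S5 and the Chari--Pressley computation of \S6. In addition, your candidate vector $\og_{j_1}\cdots\og_{j_r}\ot\bar1$ is not shown to be annihilated by all $\ttx^+_{k,t}$ (every composition in $\La(n,r)$ occurs as a weight of $\OgC^{\ot r}\ot_{\afHrC}V_\bfs$, so ``weight-space structure'' alone gives nothing), and $X_j$ does not act on $V_\bfs$ by the scalar $s_j$ --- only the central symmetric functions $\sum_jX_j^{\pm t}$ act by the expected scalars. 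The final truncation step to general $n$ is a sensible outline (it is essentially what [DDF, \S4.5] and the last section of the paper do), but with the two gaps above the proposal does not yet amount to a proof.
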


\section{Compatibility of evaluation maps}

Following \cite[Rem.~2]{Ji}, every $a\in\mbc^*$ defines a surjective algebra homomorphism $$\evUa:\afUnC\ra\UnC,\,\,\text{the quantum $\mathfrak {gl}_n$}$$ such that, for all $1\leq i\leq n-1$ and $1\leq j\leq n$,
\begin{equation}\label{evUa}
\begin{split}
(1)&\quad \evUa(E_i)=E_i,\ \evUa(F_i)=F_i,\ \evUa(K_j)=K_j,\\
(2)&\quad \evUa(E_n)=a\ttv^{-1}[F_{n-1}[F_{n-2},\cdots,[F_2,F_1]_{\ttv^{-1}}\cdots]
_{\ttv^{-1}}]_{\ttv^{-1}}K_1K_n,\\
(3)&\quad \evUa(F_n)=a^{-1}\ttv[\cdots[[E_1,E_2]_\ttv, E_3]_\ttv,\cdots,E_{n-1}]_\ttv(K_1K_n)^{-1}.
\end{split}
\end{equation}
This is called the evaluation map at $a$ for quantum affine $\mathfrak {sl}_n$. Note that our definition here is exactly the same as given in \cite[p.316]{CP96} or \cite[Prop.3.4]{CP94} (cf. footnote 1).

For any $a\in\mbc^*$, there is also an evaluation map
$\evHa:\afHrC\to\HrC$ (see, e.g., \cite[5.1]{CP96}) such that
\begin{equation}\label{evHa}
\aligned
\evHa(T_i)&=T_i,\qquad1\leq i\leq r-1,\quad\text{ and}\\
\evHa(X_j)&=a\ttv^{-2(j-1)}T_{j-1}\cdots T_2T_1T_1T_2\cdots T_{j-1}, 1\leq j\leq r.\\
\endaligned
\end{equation}
Following the notation used in \cite[2.1]{JM} with $r=1$ and $T_0=a$ (in the notation there), we will write $L_j:=\evHa(X_j)$.
Note that the elements $a^{-1}L_j=(q-q^{-1})\sL_j+1$ where
$\sL_j=\tilde T_{(1,j)}+\tilde T_{(2,j)}+\cdots+\tilde T_{(j-1,j)}$ ($\tilde T_w=q^{-l(w)}T_w$) are the usual Murphy operators.

We now use the evaluation map $\evHa$ to induce an evaluation map $\evSa$ from the affine $q$-Schur algebra to the $q$-Schur algebra.

First, there is a right $\afHrC$-module isomorphism $\OgC^{\ot r}\cong\OgnC^{\ot r}\ot_{\HrC}\afHrC$.
Second, the evaluation map $\evHa:\afHrC\ra\HrC$ induces a natural $\HrC$-module homomorphism
\begin{equation}\label{ep}
\ep_a:\OgC^{\ot r}\ra\OgnC^{\ot r},\quad xh\longmapsto x\cdot\evHa(h),
\end{equation}
for all $x\in\OgnC^{\ot r}$ and $h\in\afHrC$.

\begin{Prop} The linear map
\begin{equation}\label{evSa}
\evSa:\afSrC\ra\SrC
\end{equation} defined by
$(\evSa(\vi))(x)=\ep_a(\vi(x))$, for any $\vi\in\afSrC$ and $x\in\OgnC^{\ot r}$, is an algebra homomorphism.
\end{Prop}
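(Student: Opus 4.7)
The plan is to verify the three defining properties of an algebra homomorphism: that $\evSa(\vi)$ actually lies in $\SrC$ (i.e., is $\HrC$-linear), that $\evSa$ respects composition, and that $\evSa(\mathrm{id})=\mathrm{id}$. The key observation driving everything is that $\evHa\colon\afHrC\to\HrC$ is a retraction of algebras: from the formulas in \eqref{evHa}, $\evHa(T_i)=T_i$ for $1\le i\le r-1$, so $\evHa$ restricts to the identity on $\HrC$. Under the isomorphism $\OgC^{\ot r}\cong\OgnC^{\ot r}\ot_{\HrC}\afHrC$ of right $\afHrC$-modules, the map $\ep_a$ then becomes $y\ot h\mapsto y\cdot\evHa(h)$, a retraction of the natural inclusion $\OgnC^{\ot r}\hookrightarrow\OgC^{\ot r}$, $x\mapsto x\ot 1$. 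The whole argument is driven by pushing this retraction structure through the $\afHrC$-linearity of $\vi$.

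For the $\HrC$-linearity of $\evSa(\vi)$: given $x\in\OgnC^{\ot r}$ and $h\in\HrC\subseteq\afHrC$, the $\afHrC$-linearity of $\vi$ gives $\vi(xh)=\vi(x)\,h$ in $\OgC^{\ot r}$, and applying $\ep_a$ sends the outer $h$ to $\evHa(h)=h$, yielding $\evSa(\vi)(xh)=\evSa(\vi)(x)\,h$. Unitality of $\evSa$ is immediate from $\ep_a(x)=x$ for $x\in\OgnC^{\ot r}\subseteq\OgC^{\ot r}$.

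For multiplicativity, I fix $x\in\OgnC^{\ot r}$ and decompose $\psi(x)\in\OgC^{\ot r}$ as a finite sum $\sum_i y_ih_i$ with $y_i\in\OgnC^{\ot r}$ and $h_i\in\afHrC$; similarly decompose each $\vi(y_i)$ as $\sum_j z_{ij}g_{ij}$. Using the $\afHrC$-linearity of $\vi$ twice, a direct calculation gives
$$\evSa(\vi\psi)(x)=\ep_a\bigl(\vi(\psi(x))\bigr)=\sum_{i,j}z_{ij}\evHa(g_{ij})\evHa(h_i),$$
while
$$\bigl(\evSa(\vi)\cdot\evSa(\psi)\bigr)(x)=\ep_a\bigl(\vi(\ep_a(\psi(x)))\bigr)=\ep_a\Bigl(\sum_{i,j}z_{ij}g_{ij}\evHa(h_i)\Bigr),$$
which collapses to the same expression because $\evHa$ is an algebra homomorphism and $\evHa(\evHa(h_i))=\evHa(h_i)$ by the retraction property.

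The only nontrivial step is the multiplicativity check, and once the retraction identity $\evHa\circ\evHa=\evHa$ is exploited it reduces to the bookkeeping above. I do not anticipate any real obstacle; the proposition is essentially a formal consequence of $\evHa$ being an algebra retraction onto the finite Hecke subalgebra, transported through the tensor-product presentation of $\OgC^{\ot r}$.
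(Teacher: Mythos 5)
Your proof is correct, and it settles the only substantive point---multiplicativity---by a route that differs from the paper's. Both arguments begin identically: since $\evHa|_{\HrC}=\mathrm{id}$, the map $\evSa(\vi)$ is $\HrC$-linear (this uses that $\vi$ is $\afHrC$-linear, i.e. $\afSrC=\End_{\afHrC}(\OgC^{\ot r})$ from Theorem \ref{onto}), hence lies in $\SrC=\End_{\HrC}(\OgnC^{\ot r})$. For $\evSa(\psi\vi)=\evSa(\psi)\evSa(\vi)$, however, the paper identifies $\OgC^{\ot r}$ with $\bigoplus_{\la}x_\la\afHrC$ and verifies the identity only on double coset basis elements $\vi=\vi^x_{\mu\la}$, $\psi=\vi^y_{\nu\mu}$, exploiting the explicit form $\vi(x_\la)=x_\mu T_xh$ and $\psi(x_\mu)=x_\nu T_yh'$ with $h,h'\in\HrC$; you instead take arbitrary $\vi,\psi$ and arbitrary decompositions $\psi(x)=\sum_i y_ih_i$, $\vi(y_i)=\sum_j z_{ij}g_{ij}$ with $y_i,z_{ij}\in\OgnC^{\ot r}$ and $h_i,g_{ij}\in\afHrC$, and close the computation using that $\evHa$ is an algebra map together with the retraction identity $\evHa\circ\evHa=\evHa$. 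Your version is more elementary in that it needs nothing about the double coset basis beyond the module isomorphism $\OgC^{\ot r}\cong\OgnC^{\ot r}\ot_{\HrC}\afHrC$ already used to define $\ep_a$ (this isomorphism also guarantees that your decompositions exist, and its role in the well-definedness of $\ep_a$ is taken for granted in both proofs); the paper's version buys brevity, since on basis elements all Hecke-algebra factors are explicit and the check is a one-line computation. You also record unitality, which the paper leaves implicit.
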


\begin{proof} By regarding $\HrC$ as a subalgebra of $\afHrC$, one sees easily $\evHa|_{\HrC}$ is the identity map on $\HrC$. This fact implies that, for any $\vi\in\afSrC$, $\evSa(\vi)$ is an $\HrC$-module homomorphism. To quickly see $\evSa(\psi\vi)=\evSa(\psi)\evSa(\vi)$, we may identify $\OgC^{\ot r}$ as the direct sum
$$\fT_\vtg(n,r)=\bigoplus_{\la\in\La(n,r)}x_\la\afHrC$$
of $q$-permutation modules \cite[Lem.~8.3]{VV}, where
$x_\la=T_{\fS_\la}=\sum_{w\in\fS_\la}T_w$, and take two double coset basis elements $\vi=\vi_{\mu\la}^x$ and $\psi=\vi_{\nu\mu}^y$ so that $\vi(x_\la)=T_{\fS_\mu x\fS_\la}=x_\mu T_xh$ and $\psi(x_\mu)=T_{\fS_\nu y\fS_\mu}=x_\nu T_yh'$ for some $h,h'\in\HrC$, we have with $\bar T_z=\evHa(T_z)$
$$
\aligned
\evSa(\psi\vi)(x_\la)&=\ep_a(x_\nu T_yh'T_xh)=x_\nu \bar T_yh'\bar T_xh\\
&=\evSa(\psi)(x_\mu)\bar T_xh
=\evSa(\psi)(x_\mu\bar T_xh)=\evSa(\psi)(\evSa(\vi)(x_\la))\\
&=(\evSa(\psi)\evSa(\vi))(x_\la),\endaligned$$
as desired.\end{proof}

In order to make a comparison of representations, we need a certain compatibility relation between the evaluation maps $\evSa$ and $\evUa$ and the natural homomorphisms $\afzrC$ and $\zrC$ given in \eqref{afzrC} and \eqref{zrC}. In the rest of the section, we establish such a result.

For notational simplicity, we will write the elements in $\OgC^{\ot r}$ by omitting the tensor sign $\otimes$. For $\bfi=(i_1,i_2,\ldots,i_r)\in\mbz^r$, $\la\in\Lanr$, and $1\leq j\leq\la_1$, let
$$\og_\bfi:=\og_{i_1}\og_{i_2}
\cdots\og_{i_r},\quad\ttu_{\la,j}=\og_1^{j-1}\og_n\og_1^{\la_1-j}
\og_2^{\la_2}\cdots\og_n^{\la_n}.$$

\begin{Lem}\label{ttu}
Maintain the notation above. The action of $T_1T_2\cdots T_k$ on $\ttu_{\la,1}$
is given by the formula:
$$\ttu_{\la,1}T_1T_2\cdots T_k=\ttv^k\ttu_{\la,k+1}+(\ttv^2-1)\sum_{1\leq s\leq k}\ttv^{2k-s-1}\ttu_{\la,s},$$
for all $1\leq k\leq\la_1-1$.
\end{Lem}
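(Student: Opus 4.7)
The plan is to prove the identity by induction on $k$, using repeatedly the formulas for the $T_i$-action in \eqref{VVaction}. The key observation is that, in the tensor $\ttu_{\la,j}$, position $j$ is occupied by $\og_n$ and all other positions from $1$ up to $\la_1$ are occupied by $\og_1$. Hence the $T_i$-action for $i<\la_1$ on $\ttu_{\la,j}$ depends only on where the single $\og_n$ sits relative to $i$.

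For the base case $k=1$, positions $1$ and $2$ of $\ttu_{\la,1}$ are $n$ and $1$ respectively, so by the third case of \eqref{VVaction} we get
$\ttu_{\la,1}T_1=\ttv\,\ttu_{\la,2}+(\ttv^2-1)\ttu_{\la,1}$,
which is exactly the claimed formula at $k=1$.

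For the inductive step, assume the formula at level $k-1$ and multiply on the right by $T_k$ (with $k\leq\la_1-1$). The action of $T_k$ on the summands splits into two cases: on $\ttu_{\la,k}$, positions $k$ and $k+1$ are $n$ and $1$, so the third case of \eqref{VVaction} applies and yields $\ttu_{\la,k}T_k=\ttv\,\ttu_{\la,k+1}+(\ttv^2-1)\ttu_{\la,k}$; on $\ttu_{\la,s}$ for $s\leq k-1$, positions $k$ and $k+1$ both carry $\og_1$ (since the unique $\og_n$ sits at position $s<k$ and positions up through $\la_1$ are otherwise $\og_1$), so the first case of \eqref{VVaction} gives $\ttu_{\la,s}T_k=\ttv^2\ttu_{\la,s}$. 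Substituting these into the inductive hypothesis produces
\begin{equation*}
\ttv^{k-1}\bigl(\ttv\,\ttu_{\la,k+1}+(\ttv^2-1)\ttu_{\la,k}\bigr)+(\ttv^2-1)\sum_{1\leq s\leq k-1}\ttv^{2(k-1)-s-1}\cdot\ttv^2\,\ttu_{\la,s}.
\end{equation*}
Noting $\ttv^{k-1}=\ttv^{2k-k-1}$, the middle term fills in the missing $s=k$ summand, yielding exactly $\ttv^k\ttu_{\la,k+1}+(\ttv^2-1)\sum_{1\leq s\leq k}\ttv^{2k-s-1}\ttu_{\la,s}$, as required.

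There is no real obstacle; the only point to check carefully is the bookkeeping of the exponent $2(k-1)-s-1+2=2k-s-1$ and the verification that, at each stage $s<k\le\la_1-1$, both positions $k$ and $k+1$ of $\ttu_{\la,s}$ really are in the $\og_1$-block and not yet in the $\og_2,\ldots,\og_n$-tail. This uses precisely the hypothesis $k\leq\la_1-1$, which guarantees that the multiplication by $T_k$ stays within the block of $\og_1$'s to the right of the floating $\og_n$.
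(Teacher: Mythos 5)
Your proof is correct and is essentially the paper's own argument: induction on $k$, with $T_k$ acting on $\ttu_{\la,k}$ via the third case of \eqref{VVaction} and on $\ttu_{\la,s}$ ($s<k$) via the first case, giving the factor $\ttv^2$ that updates the exponent to $2k-s-1$. The only difference is that you make explicit the positional bookkeeping (where the single $\og_n$ sits and why $k\le\la_1-1$ keeps $T_k$ inside the $\og_1$-block), which the paper leaves implicit.
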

\begin{proof}
We prove it by induction on $k$. For $k=1$, the result following from the definition \eqref{VVaction}.
Suppose the result holds for any $s$ satisfying $s<k$. Then, by
induction, we have
\begin{equation*}
\begin{split}
\ttu_{\la,1}T_1T_2\cdots T_k&=\ttv^{k-1}\ttu_{\la,k}T_k+(\ttv^2-1)
\sum_{1\leq s\leq k-1}\ttv^{2k-s-3}\ttu_{\la,s}T_k\\
&=\ttv^{k-1}(\ttv \ttu_{\la,k+1}+(\ttv^2-1)\ttu_{\la,k})+
(\ttv^2-1)
\sum_{1\leq s\leq k-1}\ttv^{2k-s-1}\ttu_{\la,s}\\
&=\ttv^{k}\ttu_{\la,k+1}+(\ttv^2-1)\sum_{1\leq s\leq k}\ttv^{2k-s-1}\ttu_{\la,s},
\end{split}
\end{equation*}
proving the formula.
\end{proof}

For $3\leq k\leq n$, let
\begin{equation}\label{ttg}
\ttf_k=[F_{k-1}[F_{k-2},\cdots,[F_2,F_1]_{\ttv^{-1}}
\cdots]_{\ttv^{-1}}]_{\ttv^{-1}}
 \end{equation}
 and let $\ttf_2=F_1$. Then $\evUa(E_n)=aq^{-1}\ttf_nK_1K_n$.
 For $\la\in\Lanr$ and $2\leq k\leq n$, let
 $$\la_{[1,k)}=\la_1+\la_2+\cdots+\la_{k-1}.$$

\begin{Lem}\label{action of ttg}
For $\la\in\Lanr$ and $2\leq k\leq n$. The action of $\ttf_k$ on a tensor of the form
$\og_1^{\la_1}\cdots\og_{k-1}^{\la_{k-1}}\og_{\bfj}$ with $\bfj\in[k,n]^{r-\la_{[1,k)}}$ is given by the formula
$$\ttf_k\cdot\og_1^{\la_1}\cdots\og_{k-1}^{\la_{k-1}}\og_{\bfj}
=\sum_{1\leq s\leq\la_1}\ttv^{1-s}
\og_1^{s-1}\og_k\og_1^{\la_1-s}\og_2^{\la_2}\cdots
\og_{k-1}^{\la_{k-1}}\og_\bfj.$$
\end{Lem}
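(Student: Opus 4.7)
The plan is to proceed by induction on $k$, using the nested-bracket recursion $\ttf_{k+1}=[F_k,\ttf_k]_{\ttv^{-1}}=F_k\ttf_k-\ttv^{-1}\ttf_kF_k$ implicit in \eqref{ttg}. For the base case $k=2$, where $\ttf_2=F_1$, the comultiplication $\Dt(F_1)=F_1\ot 1+\ti K_1^{-1}\ot F_1$ together with the action formulas in \eqref{QGKMAlg-action} forces $F_1$ to hit one of the $\og_1$-factors of $\og_1^{\la_1}\og_\bfj$ (with $\bfj\in[2,n]^{r-\la_1}$); each of the $s-1$ preceding $\og_1$'s contributes a weight $\ti K_1^{-1}\og_1=\ttv^{-1}\og_1$, so summing over $s\in[1,\la_1]$ gives the stated formula.

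For the inductive step, fix $v=\og_1^{\la_1}\og_2^{\la_2}\cdots\og_k^{\la_k}\og_\bfj$ with $\bfj\in[k+1,n]^{r-\la_{[1,k+1)}}$. The key observation is that $v$ can equally well be written as $\og_1^{\la_1}\cdots\og_{k-1}^{\la_{k-1}}\og_{\bfj^{\ast}}$ with $\bfj^{\ast}=(k,\ldots,k,\bfj)\in[k,n]^{r-\la_{[1,k)}}$, so the induction hypothesis applies and yields $\ttf_k\cdot v$ as a single sum over $s\in[1,\la_1]$ in which an $\og_k$ is inserted at position $s$ of the $\og_1$-block. We then compute $F_k\ttf_kv$ and $\ttv^{-1}\ttf_kF_kv$ separately via $\Dt(F_k)$ and $\ti K_k\og_s=\ttv^{\dt_{k,\bar s}-\dt_{k+1,\bar s}}\og_s$. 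Since $\bfj\in[k+1,n]$, only the $\og_k$'s can be acted on by $F_k$. In $F_k\ttf_kv$, either $F_k$ hits the newly inserted $\og_k$ at position $s$ (and $\ti K_k^{-1}$ acts trivially on the preceding $\og_1$'s), yielding a single sum $A$ which matches the claimed formula for $\ttf_{k+1}v$, or $F_k$ hits the $t$-th $\og_k$ of the tail block $\og_k^{\la_k}$ (with $\ti K_k^{-1}$-weight $\ttv^{-t}=\ttv^{-1}\cdot\ttv^{-(t-1)}$ from the $\og_k$ at position $s$ and the $t-1$ earlier $\og_k$'s in the block), yielding a double sum $B$ indexed by $(s,t)\in[1,\la_1]\times[1,\la_k]$. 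A parallel computation gives $F_kv=\sum_{t}\ttv^{-(t-1)}\og_1^{\la_1}\cdots\og_{k-1}^{\la_{k-1}}\og_k^{t-1}\og_{k+1}\og_k^{\la_k-t}\og_\bfj$, and a second application of the induction hypothesis to each summand produces $\ttf_kF_kv$ as a double sum whose $(s,t)$-term, after multiplication by $\ttv^{-1}$, coincides exactly with the $(s,t)$-term of $B$. The two pieces cancel, leaving $\ttf_{k+1}v=A$, which is the claim.

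The main obstacle is the clean bookkeeping of the $\ti K_k^{-1}$-weights: on the $F_k\ttf_k$-side one must correctly record both the weight $\ttv^{-1}$ from the $\og_k$ at position $s$ and the weight $\ttv^{-(t-1)}$ from the $\og_k$'s preceding position $\la_{[1,k)}+t$, while on the $\ttf_kF_k$-side the analogous weight $\ttv^{-(t-1)}$ comes from $F_kv$ alone and the extra $\ttv^{-1}$ comes from the external coefficient in the bracket. Once these exponents are matched (both collapse to $\ttv^{1-s-t}$), the cancellation is transparent and the lemma follows.
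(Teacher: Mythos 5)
Your proposal is correct and follows essentially the same route as the paper's proof: induction on $k$ via the recursion $\ttf_{k+1}=[F_k,\ttf_k]_{\ttv^{-1}}$, with the base case $k=2$ read off from the comultiplication, and the inductive step obtained by expanding $F_k\ttf_k$ and $\ttv^{-1}\ttf_kF_k$ on the tensor and cancelling the two double-indexed sums (whose coefficients both equal $\ttv^{1-s-t}$), leaving exactly the single sum in the statement. The weight bookkeeping you describe matches the paper's computation, so there is nothing to add.
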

\begin{proof}Recall that $F_i$ acts on $\OgC^{\ot r}$ via $\Delta^{(r)}(F_i)=\sum_{s=1}^r\underbrace{\tilde K_i^{-1}\ot\cdots\ot\tilde K_i^{-1}}_{s-1}\ot F_i\ot\underbrace{1\ot\cdots\ot1}_{r-s}$. If $k=2$, the action
$F_1\cdot\og_1^{\la_1}\og_{\bfj}=\sum_{1\leq s\leq\la_1}\ttv^{1-s}
\og_1^{s-1}\og_2\og_1^{\la_1-s}\og_\bfj$ ($\bfj\in[2,n]^{r-\la_1}$) follows from
\eqref{QGKMAlg-action}. Assume now $k>2$ and $\bfj\in[k,n]^{r-\la_{[1,k)}}$.
Since $\ttf_k=[F_{k-1},\ttf_{k-1}]_{\ttv^{-1}}$, we have
\begin{equation}\label{action of ttg eq}
\ttf_k \cdot \og_1^{\la_1}\cdots\og_{k-1}^{\la_{k-1}}\og_\bfj=
F_{k-1}\ttf_{k-1}\cdot\og_1^{\la_1}\cdots\og_{k-1}^{\la_{k-1}}\og_\bfj-
\ttv^{-1}\ttf_{k-1}F_{k-1}\cdot\og_1^{\la_1}\cdots\og_{k-1}^{\la_{k-1}}\og_\bfj.
\end{equation}
Since
$$F_{k-1}\cdot\og_1^{\la_1}\cdots\og_{k-1}^{\la_{k-1}}\og_\bfj=\sum_{1\leq t\leq\la_{k-1}}\ttv^{1-t}\og_1^{\la_1}\cdots\og_{k-2}^{\la_{k-2}}
\underbrace{\og_{k-1}^{t-1}\og_{k}\og_{k-1}^{\la_{k-1}-t}\og_\bfj}_{\og_{\bfj'}},$$ where $\bfj'\in[k-1,n]^{r-\la_{[1,k-1)}}$, by induction,
$$\ttf_{k-1}F_{k-1}\cdot\og_1^{\la_1}\cdots\og_{k-1}^{\la_{k-1}}\og_\bfj
=\sum_{1\leq t\leq\la_{k-1}\atop 1\leq s\leq\la_1}\ttv^{2-t-s}
\og_1^{s-1}\og_{k-1}\og_1^{\la_1-s}\og_2^{\la_2}\cdots
\og_{k-2}^{\la_{k-2}}\og_{k-1}^{t-1}\og_k\og_{k-1}^{\la_{k-1}-t}\og_\bfj.$$
On the other hand, by induction again,
\begin{equation*}
\begin{split}
F_{k-1}\ttf_{k-1}\cdot\og_1^{\la_1}\cdots\og_{k-1}^{\la_{k-1}}\og_\bfj
&=\sum_{1\leq s\leq\la_1}F_{k-1}\cdot\ttv^{1-s}\og_1^{s-1}\og_{k-1}\og_1^{\la_1-s}\og_2^{\la_2}\cdots
\og_{k-1}^{\la_{k-1}}\og_\bfj\\
(\text{noting the extra $\og_{k-1}$})&=\sum_{1\leq t\leq\la_{k-1}\atop 1\leq s\leq\la_1}\ttv^{1-s-t}
\og_1^{s-1}\og_{k-1}\og_1^{\la_1-s}\og_2^{\la_2}\cdots
\og_{k-2}^{\la_{k-2}}\og_{k-1}^{t-1}\og_k\og_{k-1}^{\la_{k-1}-t}\og_\bfj
\\
& \qquad+\sum_{1\leq s\leq\la_1}\ttv^{1-s}\og_1^{s-1}\og_{k}\og_1^{\la_1-s}
\og_2^{\la_2}\cdots\og_{k-1}^{\la_{k-1}}\og_\bfj.
\end{split}
\end{equation*}
Substituting into \eqref{action of ttg eq} cancels the double indexed sum and yields the desired formula.
\end{proof}

\begin{Prop}\label{commuting equation}For $a\in\mbc^*$, we have
 $(\zrC\circ\evUa)(E_n)=(\evSa\circ\afzrC)(E_n).$
 \end{Prop}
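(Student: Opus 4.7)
The plan is to verify the identity as endomorphisms of $\OgnC^{\ot r}$ by evaluating both sides on the weight tensors $v_\la:=\og_1^{\la_1}\og_2^{\la_2}\cdots\og_n^{\la_n}$, $\la\in\Lanr$. Both sides lie in $\SrC=\End_{\HrC}(\OgnC^{\ot r})$: the LHS by construction, and the RHS because $\afzrC(E_n)$ is $\afHrC$-linear and $\evHa$ restricts to the identity on $\HrC\sse\afHrC$. Since $\OgnC^{\ot r}=\bop_{\la\in\Lanr}v_\la\cdot\HrC$ as an $\HrC$-module, $\HrC$-linearity reduces the proof to checking the equality on each $v_\la$.

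For the LHS, $\evUa(E_n)=a\ttv^{-1}\ttf_n K_1K_n$ together with Lemma~\ref{action of ttg} (applied with $k=n$ and $\og_\bfj=\og_n^{\la_n}$) gives $\ttf_n\cdot v_\la=\sum_{s=1}^{\la_1}\ttv^{1-s}\ttu_{\la,s}$. Each $\ttu_{\la,s}$ carries $\la_1-1$ copies of $\og_1$ and $\la_n+1$ copies of $\og_n$, so $K_1K_n$ acts by $\ttv^{\la_1+\la_n}$, yielding
$$(\zrC\circ\evUa)(E_n)(v_\la)=a\sum_{s=1}^{\la_1}\ttv^{\la_1+\la_n-s}\ttu_{\la,s}.$$

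For the RHS, the iterated coproduct $\Delta^{(r)}(E_n)=\sum_{s=1}^{r}1^{\ot(s-1)}\ot E_n\ot\ti\ttk_n^{\ot(r-s)}$ picks out only the positions carrying $\og_1$, and $\ti\ttk_n$ acting on positions $s+1,\dots,r$ contributes $\ttv^{\la_n-\la_1+s}$. Since the affine right action of $X_s$ shifts position $s$ by $-n$, the summand with $\og_0$ in position $s$ equals $\ttu_{\la,s}\cdot X_s$; applying $\ep_a$ replaces $X_s$ by $L_s:=\evHa(X_s)=a\ttv^{-2(s-1)}T_{s-1}\cdots T_1T_1\cdots T_{s-1}$, so
$$(\evSa\circ\afzrC)(E_n)(v_\la)=\sum_{s=1}^{\la_1}\ttv^{\la_n-\la_1+s}\,\ttu_{\la,s}\cdot L_s.$$
Carrying the $\og_n$ at position $s$ of $\ttu_{\la,s}$ leftward through the preceding $s-1$ copies of $\og_1$ (each swap strictly increasing, hence contributing $\ttv$) yields $\ttu_{\la,s}\cdot(T_{s-1}\cdots T_1)=\ttv^{s-1}\ttu_{\la,1}$, after which Lemma~\ref{ttu} furnishes
$$\ttu_{\la,s}\cdot L_s=a\ttu_{\la,s}+a(\ttv^2-1)\sum_{t=1}^{s-1}\ttv^{s-t-2}\ttu_{\la,t}.$$

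The main obstacle is a purely combinatorial cancellation in the resulting double sum. After substituting and swapping the order of summation, the inner geometric series collapses:
$(\ttv^2-1)\sum_{s=t+1}^{\la_1}\ttv^{\la_n-\la_1+2s-t-2}=\ttv^{\la_n+\la_1-t}-\ttv^{\la_n-\la_1+t}$.
The negative contribution precisely cancels the $t<\la_1$ portion of the diagonal sum $a\sum_{s=1}^{\la_1}\ttv^{\la_n-\la_1+s}\ttu_{\la,s}$, while the positive contribution combines with the surviving $s=\la_1$ diagonal term to produce $a\sum_{t=1}^{\la_1}\ttv^{\la_1+\la_n-t}\ttu_{\la,t}$, matching the LHS.
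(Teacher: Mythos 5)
Your proposal is correct and follows essentially the same route as the paper's proof: both reduce the identity to the generators $\og_{\bfi_\la}$ ($\la\in\Lanr$) by $\HrC$-linearity, compute the left-hand side from $\evUa(E_n)=a\ttv^{-1}\ttf_nK_1K_n$ via Lemma~\ref{action of ttg}, and compute the right-hand side from the coproduct, the identification of the $\og_0$-terms with $\ttu_{\la,s}\cdot X_s$, and Lemma~\ref{ttu}, ending with the same geometric-series cancellation. The only differences are cosmetic, namely where the powers of $\ttv$ are factored out along the way.
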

\begin{proof} We need the check that the images of both sides at $\og_{\bfi_\la}$ are equal for all $\la\in\Lanr$. Since $\Delta^{(r)}(E_i)=\sum_{s=1}^r\underbrace{1\ot\cdots\ot1}_{s-1}\ot E_i\ot\underbrace{\tilde K_i\ot\cdots\ot\tilde K_i}_{r-s}$,
\begin{equation*}
\begin{split}
(\evSa\circ&\afzrC)(E_n)\cdot\og_{\bfi_\la}=\ep_a(E_n\cdot\og_{\bfi_\la})=\ep_a(q^{\la_n-\la_1+j}\sum_{1\leq j\leq\la_1}
\og_1^{j-1}\og_0\og_1^{\la_1-j}\og_2^{\la_2}\cdots\og_n^{\la_n})\\
&=\sum_{1\leq j\leq\la_1}q^{\la_n-\la_1+j}
(\og_1^{j-1}\og_n\og_1^{\la_1-j})\og_2^{\la_2}\cdots\og_n^{\la_n}\cdot
\evHa(X_j)\\
&=a\ttv^{\la_n-\la_1+1}\!\!\!\sum_{1\leq j\leq\la_1}
\og_n\og_1^{\la_1-1}\og_2^{\la_2}\cdots\og_n^{\la_n}T_1T_2\cdots
T_{j-1}=a\ttv^{\la_n-\la_1+1}\sum_{1\leq j\leq\la_1}\ttu_{\la,1}T_1T_2\cdots
T_{j-1}.\\
\end{split}
\end{equation*}
Now applying Lemma~\ref{ttu} yields (noting that the second sum is zero if $j=1$)
\begin{equation*}
\begin{split}
(\evSa\circ\afzrC)(E_n)\cdot\og_{\bfi_\la}&=a\ttv^{\la_n-\la_1+1}\sum_{1\leq j\leq\la_1}
\bigg(\ttv^{j-1}\ttu_{\la,j}+(\ttv^2-1)\sum_{1\leq s\leq j-1}\ttv^{2j-s-3}\ttu_{\la,s}\bigg)\\
&=a\ttv^{\la_n-\la_1+1}
\bigg(\ttv^{\la_1-1}\ttu_{\la,\la_1}+\sum_{1\leq j\leq\la_1-1}\ttv^{j-1}\ttu_{\la,j}+(\ttv^2-1)\sum_{1\leq s\leq \la_1-1\atop s+1\leq j\leq\la_1}\ttv^{2j-s-3}\ttu_{\la,s}\bigg).
\end{split}
\end{equation*}
Since $(\ttv^2-1)\sum_{s+1\leq j\leq\la_1}\ttv^{2j-s-3}=\ttv^{2\la_1-s-1}-\ttv^{s-1}$, it follows that
\begin{equation*}
\begin{split}
(\evSa\circ\afzrC)(E_n)\cdot\og_{\bfi_\la}&=
a\ttv^{\la_n-\la_1+1}\bigg(\ttv^{\la_1-1}\ttu_{\la,\la_1}+\sum_{1\leq s\leq\la_1-1}\ttv^{2\la_1-s-1}\ttu_{\la,s}\bigg)\\
&=a\ttv^{\la_n}\sum_{1\leq s\leq\la_1}\ttv^{\la_1-s}\ttu_{\la,s}.
\end{split}
\end{equation*}
On the other hand, applying Lemma~\ref{action of ttg} yields $$(\zrC\circ\evUa)(E_n)\cdot\og_{\bfi_\la}=a\ttv^{\la_1+\la_n-1}
\ttf_n\cdot\og_{\bfi_\la}=a\ttv^{\la_n}\sum_{1\leq s\leq\la_1}\ttv^{\la_1-s}\ttu_{\la,s}.$$ Hence, $(\evSa\circ\afzrC)(E_n)\cdot\og_{\bfi_\la}=(\zrC\circ\evUa(E_n))\cdot\og_{\bfi_\la}$
for all $\la\in\Lanr$.
\end{proof}

\begin{Rem} We believe that the equation $(\zrC\circ\evUa)(F_n)=(\evSa\circ\afzrC)(F_n)$ holds as well. Thus, the
 following diagram is commutative:
$$\unitlength=1cm
\begin{picture}(5,3.2)
\put(0.2,2.5){$\afUnC$} \put(4,2.5){$U(n)_\mbc$} \put(0.2,0.5){$\afSrC$}
\put(4,0.5){$\SrC$} \put(1.45,2.6){\vector(1,0){2.3}}
\put(1.45,0.75){\vector(1,0){2.3}}
\put(0.9,2.4){\vector(0,-1){1.5}}
\put(4.3,2.4){\vector(0,-1){1.5}} \put(2.5,2.8){$\evUa$}
\put(2.3,0.95){$\evSa$} \put(0.35,1.7){$\afzrC$}
\put(4.4,1.7){$\zrC$}
\end{picture}$$
However, the proof is much more complicated than the $E_n$ case. Fortunately, the Identification Theorem
we will establish requires only the compatibility for the $E_n$ case.
\end{Rem}
\section{A result of Chari--Pressley}

In this section, we reproduce a theorem of Chari--Pressley \cite[3.5]{CP94}.
There are two differences in our approach. First, the isomorphism given in \cite[2.5]{CP94} and used in the proof of \cite[3.5(2)(3)]{CP94} has been replaced by the isomorphism $f$ given in \ref{DDFIsoThm} (see footnote 1). Second, the proof of \cite[3.5]{CP94} used a category equivalence \cite[3.2]{CP94} which turns  a simple ${\rm U}_\mbc(\mathfrak{sl}_n)$-module into a simple ${\rm U}_\mbc(\mathfrak{gl}_n)$-moduleand, hence, a $\afUglC$-module under the evaluation map. We directly start with a simple $q$-Schur algebra module regarded as a simple $U_\mbc(\mathfrak{gl}_n)$-module with a partition as the highest weight.
It turns out that the description of the centers of the segments, which consist of the roots of a Drinfeld polynomial, is considerably simpler under this approach.

For $\la\in\La^+(n,r)$, let $L(\la)$ be the simple $\SrC$-module with highest weight $\la$.

\begin{Thm}\label{compute P(u)}
For $a\in\mbc^*$ and $\la\in\La^+(n,r)$, let $L(\la)_a$ be the inflated $\afUglC$-module by the evaluation map  $\evSa$ in \eqref{evSa}. If $L(\la)_a|_{\afUslC}\cong \bar L(\bfP)$ with $\bfP=(P_1(u),P_2(u),\cdots,P_{n-1}(u))$, then
$$P_j(u)=\prod_{\la_{j+1}+1\leq s\leq\la_j}(1-a\ttv^{2s-1-j}u)$$
for $1\leq j\leq n-1$. In particular, the roots of $P_j(u)$ forms the segment $[a^{-1}q^{j-\la_j-\la_{j+1}};\la_j-\la_{j+1})$ with center $a^{-1}q^{j-\la_j-\la_{j+1}}$ and length $\la_j-\la_{j+1}$. (Note that  $\la_j=\la_{j+1}\implies P_j(u)=1$.)
\end{Thm}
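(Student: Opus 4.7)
The plan is to take $v_\la^+$, the classical $\UnC$-highest weight vector of $L(\la)$, as a pseudo-highest weight vector in $L(\la)_a$, and to compute the eigenvalue of the generating function $\Phi_j^+(u)$ on $v_\la^+$, using Proposition \ref{commuting equation} as the main transfer tool. The Cartan condition $\ti\ttk_j v_\la^+=\ttv^{\la_j-\la_{j+1}}v_\la^+$ is immediate, and $\ttx^+_{j,s}v_\la^+=0$ for $1\leq j<n$ and $s\in\mbz$ follows from a weight argument: $\ttx^+_{j,s}$ has $\mathfrak{gl}_n$-weight $\al_j$, while $\la+\al_j$ is not a weight of $L(\la)_a$. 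Hence $v_\la^+$ is pseudo-highest weight; moreover, the $\la$-weight space of $L(\la)_a$ is one-dimensional, so every Cartan-type element acts on $v_\la^+$ by a scalar, reducing the problem to computing these scalars.

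Using (QLA5) together with $\ttx^+_{j,s}v_\la^+=0$, one has $\phi^+_{j,k}v_\la^+=(\ttv-\ttv^{-1})E_j\,\ttx^-_{j,k}v_\la^+$ for every $k\geq 1$, so it suffices to determine the vectors $\ttx^-_{j,k}v_\la^+$ explicitly. Here I would apply Proposition \ref{commuting equation} with $f(E_n)=\ttv\sX_j\ti\ttk_n$ from \eqref{sXi} to match the $\afzrC$-action of $E_n$ on $v_\la^+$ with the $\UnC$-expression $\evUa(E_n)v_\la^+=a\ttv^{-1}\ttf_nK_1K_n v_\la^+$ given by \eqref{evUa}; Lemma \ref{action of ttg} then makes $\ttf_n v_\la^+$ completely explicit. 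Unbracketing the iterated $\ttv^{-1}$-commutator defining $\sX_j$ using the classical $F_i$-action on $v_\la^+$ isolates $\ttx^-_{j,1}v_\la^+=a\,c_{j,1}(\la)\,F_jv_\la^+$ with $c_{j,1}(\la)$ explicit, and an induction on $k$ using relations (QLA6)--(QLA7) extends this to $\ttx^-_{j,k}v_\la^+=a^kc_{j,k}(\la)F_jv_\la^+$.

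Substituting into the formula for $\phi^+_{j,k}v_\la^+$, invoking the classical identity $E_jF_jv_\la^+=[\la_j-\la_{j+1}]_\ttv v_\la^+$, summing the generating series to compute $\Phi_j^+(u)v_\la^+$, and comparing with $\Phi_j^+(u)=\ti\ttk_j\cdot\ms P_j^+(\ttv^{-2}u)/\ms P_j^+(u)$ from \eqref{PhiP relation} pins down $\ms P_j^+(u)v_\la^+=P_j(u)v_\la^+$ with the desired product $P_j(u)=\prod_{s=\la_{j+1}+1}^{\la_j}(1-a\ttv^{2s-1-j}u)$. The main obstacle is the unbracketing step: the $E_n$-compatibility only directly controls the specific iterated commutator $\sX_j$, and converting this data into the action of each individual $\ttx^-_{j,k}$ on $v_\la^+$ requires a careful induction on the bracket depth together with systematic use of the Drinfeld-presentation relations (QLA6)--(QLA7). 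The simplification compared to the original Chari--Pressley argument comes precisely from starting with a $q$-Schur highest-weight vector, whose content structure automatically produces the clean segment description $[a^{-1}q^{j-\la_j-\la_{j+1}};\la_j-\la_{j+1})$ in the final formula.
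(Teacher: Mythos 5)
Your setup (taking the classical highest weight vector $v_\la^+$, checking it is pseudo-highest weight, and using Proposition~\ref{commuting equation} together with \eqref{sXi} and Lemma~\ref{action of ttg} to compare $\sX_j$ and $\ttf_n$ on it) is exactly the paper's starting point, but the way you propose to finish has two genuine problems. First, the ``unbracketing'' of $\sX_j$ ``using the classical $F_i$-action on $v_\la^+$'' does not work as stated: already the innermost bracket $[\ttx^-_{j-1,0},\ttx^-_{j,1}]_{\ttv^{-1}}v_\la^+$ produces the term $\ttx^-_{j,1}F_{j-1}v_\la^+$, i.e.\ the unknown operator $\ttx^-_{j,1}$ applied to a vector that is not highest weight, so the bracket cannot be resolved mode by mode. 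The paper circumvents this by applying the string of raising operators $E_{j-1}\cdots E_2E_1E_{j+1}\cdots E_{n-1}$ to \emph{both} sides of the identity coming from Proposition~\ref{commuting equation} and comparing the results in the one-dimensional weight space of weight $\la-\al_j$, using only (QGL2)--(QGL3); this is what yields $\ttx^-_{j,1}v_\la^+=a\ttv^{2\la_j-j-1}F_jv_\la^+$.

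Second, and more seriously, your plan to determine the whole series $\Phi_j^+(u)v_\la^+$ by an induction giving $\ttx^-_{j,k}v_\la^+=a^kc_{j,k}(\la)F_jv_\la^+$ ``using (QLA6)--(QLA7)'' is not substantiated, and those relations cannot carry the induction: they involve only products of two lowering operators, so applied to $v_\la^+$ they give identities in the weight space $\la-2\al_j$ that mix the unknown scalars $c_{j,k}$ with the equally unknown action of the modes $\ttx^-_{j,s}$ on $F_jv_\la^+$. A workable recursion would instead need (QLA3)/(QLA5), i.e.\ commutation with $\tth_{j,1}$, whose eigenvalues on $v_\la^+$ \emph{and} on $F_jv_\la^+$ must themselves be computed, or an appeal to the classification of $U_q(\widehat{\mathfrak{sl}}_2)$ evaluation modules. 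The paper avoids this entire resummation: having computed only $\ttx^-_{j,1}w_0$, it applies $E_j$ to get the single coefficient $\phi^+_{j,1}$, and then invokes the corollary in \cite[3.5]{CP94} which guarantees a priori that the roots of $P_j(u)$ form one $q$-segment of length $\mu_j=\la_j-\la_{j+1}$, so that only the center $b_j$ remains to be fixed by that one coefficient via \eqref{PhiP relation}. As written, your argument is missing either this structural input or a correct derivation of all higher modes, so the final identification of $P_j(u)$ is not yet proved.
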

\begin{proof} The proof here follows that of \cite[3.5]{CP94}.  By Lemma \ref{DDFIsoThm}, we identify $\DC(n)$ with $\afUglC$ under the isomorphism $f$. Thus, $F_j=\ttx^-_{j,0}$,
$E_j=\ttx^+_{j,0}$, and
$K_i=\ttk_i$, for all $1\leq j\leq n-1$, $1\leq i\leq n$. With the notation in \eqref{ttg}, we have $\evUa(E_n)=aq^{-1}\ttf_n K_1K_n$.

Fix $i\in[1, n-1]$. Recall from \eqref{sXi def} the notation $\sX_i$. Then \eqref{sXi} becomes $E_n=(-1)^{i-1}q\sX_i\ti\ttk_n$. If $w_0$ is a highest weight vector in $L(\la)_\la$, then, by Proposition~\ref{commuting equation},
$$(-1)^{i-1}\ttv^{\la_n-\la_1+1}\sX_iw_0=
\evSa\circ\afzrC(E_n)\cdot w_0=\zrC\circ\evUa(E_n)\cdot w_0=a\ttv^{\la_1+\la_n-1}\ttf_nw_0.$$
Thus,
\begin{equation}\label{compute P(u) eq1}
\sX_iw_0 =(-1)^{i-1}
a\ttv^{2(\la_1-1)}
\ttf_nw_0.
\end{equation}
Let $\mu=(\mu_1,\cdots,\mu_{n-1})$ with $\mu_i=\la_i-\la_{i+1}$ and define recursively
$$\sX_{i,j}=\begin{cases}[F_1,[F_2,\cdots,[F_{i-1},
\ttx_{i,1}^-]_{\ttv^{-1}}\cdots]_{\ttv^{-1}}]_{\ttv^{-1}},&\text{ if }j=i;\\
[F_j,\sX_{i,j-1}]_{\ttv^{-1}},&\text{ if }i+1\leq j\leq n-1.\\
\end{cases}$$
Then, $\sX_{i,n-1}=\sX_i$ and
$$\aligned
E_{n-1}\sX_i&=E_{n-1}F_{n-1}\sX_{i,n-2}-\ttv^{-1}\sX_{i,n-2}E_{n-1}F_{n-1}\\
&=\bigl(F_{n-1}E_{n-1}+\frac{\ti\ttk_{n-1}-\ti\ttk_{n-1}^{-1}}{\ttv-\ttv^{-1}}\bigr)\sX_{i,n-2}-
\ttv^{-1}\sX_{i,n-2}\bigl(F_{n-1}E_{n-1}+\frac{\ti\ttk_{n-1}-\ti\ttk_{n-1}^{-1}}{\ttv-\ttv^{-1}}\bigr).
\\\endaligned$$
By (QGL2), we see that $\frac{\ti\ttk_{n-1}-\ti\ttk_{n-1}^{-1}}{\ttv-\ttv^{-1}}\sX_{i,n-2}=
\sX_{i,n-2}\frac{\up\ti\ttk_{n-1}-\up^{-1}\ti\ttk_{n-1}^{-1}}{\ttv-\ttv^{-1}}$. Hence,
$$E_{n-1}\sX_iw_0=([\mu_{n-1}+1]-\ttv^{-1}[\mu_{n-1}])\sX_{i,n-2}w_0.$$
Inductively, we obtain
\begin{equation*}
E_{i+1}E_{i+2}\cdots E_{n-1}
\sX_iw_0=\prod_{i+1\leq s\leq n-1}([\mu_s+1]-\ttv^{-1}[\mu_s])
\sX_{i,i}w_0
\end{equation*}
and, similarly,
$$\aligned E_{i-1}E_{i-2}\cdots E_{1}\sX_{i,i}w_0&= E_{i-1}E_{i-2}\cdots E_{1}
[F_1,[F_2,\cdots,[F_{i-1},\ttx_{i,1}^-]_{\ttv^{-1}}\cdots
]_{\ttv^{-1}}]_{\ttv^{-1}}w_0\\
&=\prod_{1\leq s\leq i-1}([\mu_s+1]-
\ttv^{-1}[\mu_s])\ttx_{i,1}^-w_0.
\endaligned$$
Hence,
\begin{equation}\label{compute P(u) eq2}
\begin{split}
E_{i-1}\cdots E_2 E_1E_{i+1}E_{i+2}\cdots E_{n-1}
\sX_iw_0&=\prod_{1\leq s\leq n-1\atop s\not=i}([\mu_s+1]-\ttv^{-1}[\mu_s])\ttx_{i,1}^-w_0.
\end{split}
\end{equation}
On the other hand,
\begin{equation*}
\begin{split}
E_{i-1}\cdots E_2E_1E_{i+1}E_{i+2}\cdots E_{n-1}
\ttf_nw_0&=\prod_{i+1\leq s\leq n-1}
([\mu_s+1]-\ttv^{-1}[\mu_s])E_{i-1}\cdots E_2E_1\ttf_{i+1}  w_0\\
&=\prod_{i+1\leq s\leq n-1}
([\mu_s+1]-\ttv^{-1}[\mu_s])\\
&\qquad\times\prod_{1\leq s\leq i-1}
([\mu_s]-\ttv^{-1}[\mu_s+1])F_iw_0.
\end{split}
\end{equation*}
This together with \eqref{compute P(u) eq1} and the fact that
$$\prod_{1\leq s\leq i-1}\frac{[\mu_s]-\ttv^{-1}[\mu_s+1]}
{[\mu_s+1]-\ttv^{-1}[\mu_s]}=(-1)^{i-1}\ttv^{2(\la_i-\la_1)-i+1}$$
implies that
$$\ttx_{i,1}^-w_0=a\ttv^{2\la_i-i-1}F_iw_0.$$
Applying $E_i$ to the above equation and noting $\phi_{i,1}^-=0$ yields
\begin{equation}\label{compute P(u) eq3}
\phi_{i,1}^+w_0=a\ttv^{2\la_i-i-1}(\ttv^{\mu_i}-
\ttv^{-\mu_i})w_0.
\end{equation}
By the corollary in \cite[3.5]{CP94}, we may assume
$$P_i(u)=\prod_{1\leq j\leq\mu_i}(1-b_i\ttv^{2j-\mu_i-1}u).$$
Thus,
$$\ttv^{\mu_i}\frac{P_i(\ttv^{-2}u)}{P_i(u)}
=\ttv^{\mu_i}+\ttv^{\mu_i}b_i(\ttv^{\mu_i-1}-\ttv^{-\mu_i-1})u
+{\bf O}(u^2).
$$
This together with \eqref{PhiP relation} and \eqref{compute P(u) eq3} implies that
$$a\ttv^{2\la_i-i-1}(\ttv^{\mu_i}-
\ttv^{-\mu_i})=\ttv^{\mu_i}b_i(\ttv^{\mu_i-1}-\ttv^{-\mu_i-1}).$$
Hence, $b_i=a\ttv^{\la_i+\la_{i+1}-i}$ and
$$P_i(u)=\prod_{1\leq j\leq\mu_i}(1-a\ttv^{2(\la_{i+1}+j)-1-i})
=\prod_{\la_{i+1}+1\leq s\leq\la_i}(1-a\ttv^{2s-1-i}u),$$
as required.
\end{proof}

\section{An Identification Theorem}

We now compute the dominant Drinfeld polynomials $\bfQ=(Q_1(u),\ldots,Q_n(u))$ such that
the $\afUglC$-module $L(\la)_a\cong L(\bfQ)$.
By Theorem \ref{compute P(u)}, it remains to compute $Q_n(u)$. This will be done by the action of the central elements $\sfz_t^\pm$ on a highest weight vector $w_0\in L(\la)$.

We first apply a result of James--Mathas to compute the action of  $\sfz_t^\pm$ on the simple $\SrC$-module $L(\la)$ via the evaluation map $\evSa$ in \eqref{evSa}.

The Hecke algebra $\HrC$ of the symmetric groups $\fS_r$ admits a so-called Murphy's basis \cite{Mur}
$$\{x_{\tts,\ttt}:=T_{d(\tts)}^*x_\la T_{d(\ttt)}\mid \tts,\ttt\in\sT^s(\la),\la\in\La^+(r,r)\},$$ where $\sT^s(\la)$ is the set of all standard $\la$-tableaux, $*$ is the anti-involution satisfying $T_i^*=T_i$, and $d(\ttt)$ is the permutation mapping the standard $\la$-tableau $\ttt^\la$ (obtained by filling $1,2,\ldots,r$ from left to right down successive row) to $\ttt$. The subspace $\sH^{\trir\la}$ of $\HrC$ spanned by
$\{x_{\tts\ttt}\mid \tts, \ttt\in \sT^{s}(\mu),
\mu\trir\la\}$ is a two sided ideal of $\HrC$. Let $S^\la=x_\la\HrC/(x_\la\HrC\cap\sH^{\trir\la})$. Then
$S^\la\cong y_{\la'}T_{w_{\la'}} x_\la\HrC$ (see, e.~g., \cite[\S3]{DJ2}).

Similarly, for partition $\la$ of $r$, let $\sT^{ss}(\la,n)$ (resp. $\sT^{ss}(\la,\mu)$) be the set of all semistandard $\la$-tableaux with content in $[1,n]$ (resp., of content $\mu$). Then, the tensor space $\OgnC^{\ot r}$ can be identified with
$$\fT(n,r)=\oplus_{\mu\in\La(n,r)}x_\la\HrC,$$ and the Murphy basis induces
a basis (see, e.g., \cite{JM})
$$\{\fkm_{\ttS,\ttt}\mid (\ttS,\ttt)\in\sT^{ss}(\la,n)\times\sT^s(\la),\,\forall \la\in\La^+(n,r)\}.$$
Fix a linear ordering on
$\La^+(n,r)=\{\la^{(1)},\la^{(2)},\ldots,\la^{(N)}\}$ which refines
the dominance ordering $\trianglerighteq$, i.e.,
 $\la^{(i)}\vartriangleright\la^{(j)}$ implies $i<j$.
For each $1\le i\le N$, let $\fT_i$ denote the
subspace of $\fT$ spanned by all $\fkm_{\ttS,\ttt}$ such that
$(\ttS,\ttt)\in\sT^{ss}(\la,n)\times\sT^s(\la)$ for some $\la\in\{\la^{(1)},\ldots,\la^{(i)}\}$. Then we obtain a
filtration by $\SrC$-$\HrC$-subbimodules:
\begin{equation}
0=\fT_0\subseteq
\fT_1\subseteq\cdots\subseteq\fT_N=\fT(n,r)\end{equation}
such that
$\fT_i/\fT_{i-1}\cong L(\la^{(i)})\otimes S^{\la^{(i)}}$ as $\SrC$-$\HrC$-bimodules.

For $\la=\la^{(i)}$, $\ttS\in \sT^{ss}(\la,n)$, $\tts\in\sT^s(\la)$, $\ttT^\la\in\sT^{ss}(\la,\la)$, and $\ttt^\la$ as above, let
$\vi_{\ttS}=\fkm_{\ttS,\ttt^\la}+\fT_{i-1}$ and $\psi_{\tts}=\fkm_{\ttT^\la,\tts}+\fT_{i-1}$. Then
$$W^\la:=\SrC\vi_{\ttT^\la}$$ is a simple $\SrC$-module, isomorphic to $L(\la)$, and $\{\vi_\ttS\}_{\ttS\in \sT^{ss}(\la,n)}$ forms a basis for $W^\la$.
Since $\fkm_{\ttT^\la,\ttt^\la}=x_\la$, $\vi_{\ttT^\la}\HrC=\psi_{\ttt^\la}\HrC\cong S^\la$ with basis $\{\psi_\tts\}_{\tts\in\sT^s(\la)}$.

Since $\sfz_t^\pm$ are central elements in $\afUglC$, it follows that $(\evSa\circ\afzrC)(\sfz_t^\pm)$ are central in $\SrC$. By Schur's Lemma, $(\evSa\circ\afzrC)(\sfz_t^\pm)$ acts on $W^\la$ by a scalar $c_t^\pm(\la)$. We now compute this scalar.

\begin{Lem} \label{action of central element on Weyl module} Let $\la\in\La^+(n,r)$ and $t$ a positive integer. If $(\evSa\circ\afzrC)(\sfz_t^\pm)$ acts on $W^\la$ by $c_t^\pm(\la)\in\mbc$, then
$$c_t^\pm(\la)=a^{\pm t}\sum_{1\leq i\leq n\atop 1\leq j\leq\la_i}\ttv^{\pm
2t(j-i)}.$$
\end{Lem}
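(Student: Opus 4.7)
The plan is to reduce the computation of $c_t^\pm(\la)$ to the classical content formula for Jucys--Murphy elements acting on the Specht module $S^\la$.

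First, combining Lemma \ref{action central elts} with the right $\afHrC$-module isomorphism $\OgC^{\ot r}\cong\OgnC^{\ot r}\ot_{\HrC}\afHrC$, the operator $\afzrC(\sfz_t^\pm)$ acts on $\OgC^{\ot r}$ as right multiplication by $\sum_{j=1}^r X_j^{\pm t}\in\afHrC$. Unwinding the definition \eqref{evSa}, the operator $(\evSa\circ\afzrC)(\sfz_t^\pm)\in\SrC$ acts on $\fT(n,r)=\OgnC^{\ot r}$ as right multiplication by $\sum_{j=1}^r L_j^{\pm t}$, where $L_j:=\evHa(X_j)$ as in \eqref{evHa}.

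Second, I would exploit the $\SrC$-$\HrC$-bimodule filtration $0=\fT_0\subseteq\cdots\subseteq\fT_N=\fT(n,r)$ with subquotients $\fT_i/\fT_{i-1}\cong L(\la^{(i)})\ot S^{\la^{(i)}}$. Since $\sum_j X_j^{\pm t}$ is central in $\afHrC$ (being symmetric in the $X_j^{\pm 1}$), its image $\sum_j L_j^{\pm t}$ is central in $\HrC$ and hence acts on the simple module $S^\la$ by a scalar $\al_t^\pm(\la)$. On the subquotient indexed by $\la=\la^{(i)}$, the left $\SrC$-action of $(\evSa\circ\afzrC)(\sfz_t^\pm)$ produces the scalar $c_t^\pm(\la)$ on the $L(\la)\cong W^\la$ factor, while the equal right $\HrC$-action of $\sum_j L_j^{\pm t}$ produces $\al_t^\pm(\la)$ on the $S^\la$ factor. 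These two scalars coincide, reducing the problem to computing $\al_t^\pm(\la)$.

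Third, I would invoke the classical fact that the $L_j$ act simultaneously diagonally on the Young seminormal basis of $S^\la$: the vector labeled by a standard $\la$-tableau $\tts$ is an eigenvector of $L_j$ with eigenvalue $a\ttv^{2c_\tts(j)}$, where $c_\tts(j)=k-i$ is the content of the box $(i,k)$ of $\tts$ occupied by $j$. Since $\{c_\tts(j):1\le j\le r\}$ is the multiset of contents of $[\la]$, independent of $\tts$, summing over $j=1,\ldots,r$ and raising to the $t$-th power gives
\begin{equation*}
\al_t^\pm(\la)=a^{\pm t}\sum_{\substack{1\leq i\leq n\\ 1\leq k\leq\la_i}}\ttv^{\pm 2t(k-i)},
\end{equation*}
which is the desired formula for $c_t^\pm(\la)$.

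The main obstacle is the verification of the content eigenvalue formula with the specific normalization $L_j=a\ttv^{-2(j-1)}T_{j-1}\cdots T_1T_1\cdots T_{j-1}$ of \eqref{evHa}. This is essentially the classical theorem of Murphy--Mathas, but the $a$-shift and the $\ttv$-power convention require careful bookkeeping; the recursion $L_{j+1}=\ttv^{-2}T_jL_jT_j$ together with the quadratic relation $(T_j-\ttv^2)(T_j+1)=0$ makes this an inductive verification, and can alternatively be bypassed by computing the action of $L_j$ on $x_\la$ modulo $\sH^{\trir\la}$ using the Murphy basis, yielding the same diagonal scalar on $\psi_{\ttt^\la}$.
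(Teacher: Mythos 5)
Your argument is correct, and its skeleton is the same as the paper's: both identify $(\evSa\circ\afzrC)(\sfz_t^\pm)$ on $\fT(n,r)$ with right multiplication by $\sum_{j}L_j^{\pm t}$ (via Lemma~\ref{action central elts} and the definition of $\ep_a$), and both ultimately extract the scalar from the multiset of contents of $[\la]$. Where you diverge is the last step. The paper stays on the vector $x_\la$ (the image of $\og_{\bfi_\la}$, i.e.\ of the highest weight vector $\vi_{\ttT^\la}$) and quotes the James--Mathas congruence $x_\la\cdot L_s\equiv\res_{\ttt^\la}(s)\,x_\la \bmod x_\la\HrC\cap\sH^{\trir\la}$ \cite[3.7]{JM}, then compares with $c_t^\pm(\la)x_\la \bmod \fT_{i-1}$; this needs no semisimplicity and the residue normalization ($\res_{\ttt^\la}(s)=a\ttv^{2(j-i)}$, with the $q,Q_1$ of \cite{JM} being $q^2,a$ here) is exactly the bookkeeping you flag as the remaining obstacle. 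You instead pass to the bimodule subquotient $L(\la)\ot S^{\la}$, observe that $\sum_j L_j^{\pm t}$ is central in $\HrC$ (image of a central element under the surjection $\evHa$), equate the left-hand scalar $c_t^\pm(\la)$ with the right-hand scalar on the Specht factor, and evaluate the latter on the seminormal basis. This is a clean transfer, but it uses semisimplicity of $\HrC$ (available here since $\ttv$ is not a root of unity) and the seminormal eigenvalue formula for the $L_j$ in the normalization \eqref{evHa}, which is the same content-residue computation in different clothing; your proposed fallback, computing $x_\la L_j$ modulo $\sH^{\trir\la}$ via the Murphy basis, is precisely the paper's route. Net effect: the paper's proof is slightly more economical and normalization-safe because it leans on the cited \cite[3.7]{JM} directly, while yours isolates more explicitly why the answer only depends on the contents of $\la$ and not on the particular vector used.
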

\begin{proof} Under the $\afHrC$-module isomorphism $\OgC^{\ot r}\cong\fT_\vtg(n,r)$ and its restriction giving an $\sH(r)_\mbc$-module isomorphism
$\OgnC^{\ot r}\cong\fT(n,r)$, we identify the two $\afHrC$-modules and the two $\HrC$-modules. In particular, the tensor $\og_{\bfi_\la}$ identifies $\up^{-\ell(w_{0,\la})}x_{\ttT^\la,\ttt^\la}=\up^{-\ell(w_{0,\la})}x_\la$,
 where
$\bfi_\la=(\underbrace{1,\ldots,1}_{\la_1},\ldots,\underbrace{n,\cdots,n}_{\la_n})$ and  $w_{0,\la}$ is the longest element in $\fS_\la$.

Suppose $(\evSa\circ\afzrC)(\sfz_t^\pm)\cdot \vi_{\ttT^\la}=c_t^\pm(\la)\vi_{\ttT^\la}.$ Then
\begin{equation}\label{action of central element on Weyl module eq}
(\evSa\circ\afzrC)(\sfz_t^\pm)(x_\la)\equiv c_t^\pm(\la) x_\la\mod \fT_{i-1}.
\end{equation}
On the other hand, by Lemma \ref{action central elts},
$$(\evSa\circ\afzrC)(\sfz_t^\pm)(x_\la)=\evSa(\afzrC(\sfz_t^\pm))(x_\la)=
\ep_a(\sfz_t^\pm\cdot x_\la)=\ep_a(x_\la\cdot\sum_{s=1}^rX_t^{\pm t})=x_\la\cdot\sum_{s=1}^rL_t^{\pm t},$$
where $L_s=\evHa(X_s)\in\HrC$ for all $1\leq s\leq r$.
By \cite[3.7]{JM}, we have
$$x_\la\cdot L_s\equiv\res_{\ttt^\la}(s)x_\la\mod x_\la\HrC\cap\sH^{\trir\la},$$
where, if $s$ in $\ttt^\la$ is at row $i$ and column $j$, then $\res_{\ttt^\la}(s)=a\ttv^{2(j-i)}$ is the residue\footnote{The $q, Q_1$ in \cite{JM} are $q^2, a$ here.} at $s$. Thus,
$$(\evSa\circ\afzrC)(\sfz_t^\pm)(x_\la)
=\sum_{1\leq s\leq r}x_\la\cdot L_s^{\pm t}\equiv
\sum_{1\leq s\leq r}(\res_{\ttt^\la}(s))^{\pm t}x_\la\mod\fT^{i-1}.$$
Comparing this with \eqref{action of central element on Weyl module eq} yields
$$c_t^\pm(\la)=\sum_{1\leq s\leq r}(\res_{\ttt^\la}(s))^{\pm t}=a^{\pm t}\sum_{1\leq i\leq n,\, 1\leq j\leq\la_i}\up^{\pm
2t(j-i)},$$
as desired.
\end{proof}

\begin{Thm}\label{L(la)a}
For $a\in\mbc^*$ and $\la\in\La^+(n,r)$, suppose $L(\la)_a\cong L(\bfQ)$ for some $\bfQ\in\ms Q(n)_r$. Then $\bfQ=(Q_1(u),\cdots,Q_m(u),1,\ldots,1)$, where $m$ is the number of parts of $\la$, $$\frac{Q_i(u\ttv^{i-1})}{Q_{i+1}(u\ttv^{i+1})}=\prod_{\la_{i+1}+1\leq s\leq\la_i}(1-a\ttv^{2s-1-i}u),$$
for all $1\leq i\leq m-1$, and $Q_m(u)=\prod_{1\leq s\leq\la_m}(1-a\ttv^{2(s-m)}u)$.
\end{Thm}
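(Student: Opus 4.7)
The plan is to determine $\bfQ$ by combining three sources of information: the ratios $Q_i(\ttv^{i-1}u)/Q_{i+1}(\ttv^{i+1}u) = P_i(u)$ furnished by Theorems \ref{compute P(u)} and \ref{classification of simple afUglC-modules}(1), the degree constraints $\deg Q_i = \la_i$ forced by the $\ttk_i$-eigenvalue on the highest weight vector, and, when needed, the central-element eigenvalues from Lemma \ref{action of central element on Weyl module}.

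The degree constraint immediately yields $Q_i = 1$ for $i > m$, accounting for the trailing ones in the claimed $\bfQ$. The stated ratio formula for $1 \leq i \leq m-1$ is exactly Theorem \ref{classification of simple afUglC-modules}(1) applied with the explicit $P_i$ from Theorem \ref{compute P(u)} and therefore requires no further argument. If $m < n$, then starting from $Q_{m+1} = 1$ and applying the recursion at level $i = m$ gives, after the substitution $v = \ttv^{m-1}u$,
$$Q_m(v) = P_m(\ttv^{1-m}v) = \prod_{s=1}^{\la_m}(1 - a\ttv^{2(s-m)}v),$$
as claimed.

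The remaining and most delicate case is $m = n$, where $\la_n > 0$ and the degree constraint alone does not pin down $Q_n$. Here I would invoke the central-element action: under $f$ (Theorem \ref{DDFIsoThm}), $\sfz_t^+$ corresponds to $\frac{t\ttv^t}{[t]_\ttv}\sum_{i=1}^n \ttg_{i,t}$, and unpacking the logarithmic generating function \eqref{scrQ} shows that the eigenvalue of $\sfz_t^+$ on the pseudo-highest weight vector of $L(\bfQ)$ equals the power sum $\sum_k \alpha_k^t$, where $\{\alpha_k\}$ is the multiset of inverse roots of $\prod_{i=1}^n Q_i(u)$. Comparing with Lemma \ref{action of central element on Weyl module}, which gives this eigenvalue as $a^t \sum_{(i,j) \in \la}\ttv^{2t(j-i)}$, and matching power sums for all $t \geq 1$, one identifies the multiset of inverse roots as $\{a\ttv^{2(j-i)} : (i,j) \in \la\}$. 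Combined with the ratios above, this determines each $Q_i$ uniquely and yields in particular $Q_n(u) = \prod_{s=1}^{\la_n}(1 - a\ttv^{2(s-n)}u)$. The main technical obstacle is precisely this last step: pushing $\sfz_t^+$ through $f$, expanding \eqref{scrQ} to express the $\ttg_{i,t}$-eigenvalues in terms of power sums of inverse roots, and reconciling the algebraic expansion with the combinatorial residue sum from Lemma \ref{action of central element on Weyl module}.
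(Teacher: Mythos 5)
Your proposal is correct and follows essentially the same route as the paper's proof: the ratios $P_i$ come from Theorems \ref{classification of simple afUglC-modules} and \ref{compute P(u)}, the case $m<n$ is settled by the degree constraints and telescoping, and for $m=n$ the eigenvalues of $\sfz_t^+$ from Lemma \ref{action of central element on Weyl module}, pushed through $f$ and the generating functions \eqref{scrQ}, give $\prod_i Q_i(u)=\prod_{(i,j)\in\la}(1-a\ttv^{2(j-i)}u)$, from which $Q_n$ is isolated via the ratio relations exactly as in the paper. The only detail you leave implicit is why $\prod_{0\leq l\leq n-1}Q_n(u\ttv^{2l})$ determines $Q_n$ (the paper instead solves for $\ttg_{n,t}\cdot w_0$ and re-exponentiates), which is routine since $\ttv$ is not a root of unity.
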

\begin{proof}
Since $\evSa$ is surjective, $L(\la)_a$ is an irreducible $\afSrC$-module. Thus, by \cite[4.5.8]{DDF}, there exists $\bfQ=(Q_1(u),\cdots,Q_n(u))\in\sQ(n)_r$ such that $L(\la)_a\cong L(\bfQ)$. For $1\leq j\leq n-1$,
let $$P_j(u)=\frac{Q_j(u\ttv^{j-1})}{Q_{j+1}(u\ttv^{j+1})}.$$
By Theorems \ref{classification of simple afUglC-modules} and \ref{compute P(u)} we have, for $1\leq j\leq n-1$,
$$P_j(u)=\prod_{\la_{j+1}+1\leq s\leq\la_j}(1-a\ttv^{2s-1-j}u).$$
Thus, if $m<n$, then $P_j(u)=1$, for all $m+1\leq j< n$. Hence, $Q_i(u)=1$ for all $m<i\leq n$ and
\begin{equation}\label{Q_i's}
\begin{tabular}{r r}
$Q_1(u)=$&$P_1(u)P_2(u\ttv)\cdots P_{m-1}(u\ttv^{m-2})P_m(u\ttv^{m-1}),$\\
$Q_2(u)=$&$P_2(uq^{-1})\cdots P_{m-1}(u\ttv^{m-4})P_m(u\ttv^{m-3}),$\\
$\cdots$&$\cdots$\qquad\qquad\qquad$\cdots$\qquad\\
$Q_{m-1}(u)=$&$P_{m-1}(u\ttv^{-m+2})P_m(u\ttv^{-m+3}),$\\
$Q_{m}(u)=$&$P_m(u\ttv^{-m+1}).$\\
\end{tabular}
\end{equation}
In particular, $Q_m(u)=P_m(u\ttv^{-m+1})=\prod_{1\leq s\leq\la_m}(1-a\ttv^{2(s-m)}u)$, as required in this case.

We now assume $m=n$. Then we have recursively,
$$Q_i (u)=P_i (u\ttv^{-i+1})P_{i+1} (u\ttv^{-i+2})\cdots
P_{n-1} (u\ttv^{n-2i})Q_n (u\ttv^{2(n-i)}),$$
for all $1\leq i\leq n$. We now compute $Q_n(u)$.

Let $w_0$ be a nonzero vector in $L(\la)_\la$. Since $\sfz_t^+=\frac{t\ttv^{ t}}{[t]_\ttv}\sum_{1\leq i\leq n}\ttg_{i, t}$ under the isomorphism $f$ in Lemma \ref{DDFIsoThm}, Lemma \ref{action of central element on Weyl module} implies
$$
\frac{t\ttv^{ t}}{[t]_\ttv}\sum_{1\leq i\leq n}\ttg_{i, t}\cdot w_0=c_t^+(\la) w_0
=(\evSa\circ\afzrC(\sfz_t^+)) w_0=a^t\sum_{1\leq i\leq n\atop
1\leq j\leq\la_i}\ttv^{2t(j-i)}w_0.
$$
 Thus, by \eqref{HWvector}, $\ms Q_i^+(u)\cdot w_0=Q_{i,s}w_0$ for all $s\geq0$ give an identity in $L(\la)[[u]]$:
 $$\prod_{1\leq i\leq n} Q_i(u)w_0=\prod_{1\leq i\leq n}\ms
Q_i^+(u)\cdot w_0.$$
However, by \eqref{scrQ},
\begin{equation*}
\begin{split}
\prod_{1\leq i\leq n}\ms
Q_i^+(u)\cdot w_0&=\exp\bigg(-\sum_{t\geq
1}\frac{1}{[t]_\ttv}\bigg(\sum_{1\leq i\leq n}\ttg_{i, t}\bigg)
(u\ttv)^{ t}\bigg)w_0\\
&=\exp\bigg(-\sum_{1\leq i\leq n\atop 1\leq j\leq\la_i}\sum_{t\geq
1}\frac 1 t(au\ttv^{2(j-i)})^{t}\bigg)w_0\\
&=\prod_{1\leq i\leq n\atop 1\leq j\leq\la_i}\exp\bigg(-\sum_{t\geq
1}\frac 1 t (au\ttv^{2(j-i)})^{t}\bigg)w_0\\
&=\prod_{1\leq i\leq n\atop 1\leq j\leq\la_i}\bigg(1-
au\ttv^{2(j-i)}\bigg)w_0.
\end{split}
\end{equation*}
Hence,
\begin{equation}\label{one hand}
\prod_{1\leq i\leq n} Q_i (u)=\prod_{1\leq i\leq n}\prod_{1\leq j\leq\la_i}\bigg(1-
a\ttv^{2(j-i)}u\bigg).
\end{equation}
Since
$Q_i (u)=P_i (u\ttv^{-i+1})P_{i+1} (u\ttv^{-i+2})\cdots
P_{n-1} (u\ttv^{n-2i})Q_n (u\ttv^{2(n-i)}),$
we have
\begin{equation}\label{another hand}
\begin{split}
\prod_{1\leq i\leq n} Q_i(u) &=\prod_{1\leq k\leq n-1}\bigg( P_k(u\ttv^{-k+1})P_k(u\ttv^{-k+3})\cdots P_k (u\ttv^{k-1})\bigg)\prod_{0\leq l\leq n-1}Q_n (u\ttv^{2l}).
\end{split}
\end{equation}
Now,
\begin{equation*}
\begin{split}
\prod_{1\leq k\leq n-1}
\bigg(P_k (u\ttv^{-k+1})P_k (u\ttv^{-k+3})\cdots P_k (u\ttv^{k-1})
\bigg)
&=\prod_{1\leq k\leq n-1}\prod_{1\leq i\leq k\atop\la_{k+1}+1\leq j\leq\la_k}(1-
a\ttv^{2(j-i)}u)\\
&=\prod_{1\leq i\leq n-1}\prod_{i\leq k\leq n-1\atop\la_{k+1}+1\leq j\leq\la_k}(1-
a\ttv^{2(j-i)}u)\\
&=\prod_{1\leq i\leq n-1}\prod_{\la_n+1\leq j\leq\la_i}(1-
a\ttv^{2(j-i)}u).
\end{split}
\end{equation*}
This together with \eqref{one hand} and \eqref{another hand} implies that
$$\prod_{0\leq l\leq n-1}Q_n (u\ttv^{2l})=
\prod_{1\leq i\leq n}\prod_{1\leq j\leq\la_n}(1-
a\ttv^{2(j-i)}u).$$
Hence, we have in $L(\la)[[u]]$:
$$\prod_{0\leq l\leq n-1}\ms
Q_n(u\ttv^{2l})\cdot w_0=\prod_{0\leq l\leq n-1}
Q_n(u\ttv^{2l}) w_0=\prod_{1\leq i\leq n\atop 1\leq j\leq
\la_n}(1-a\ttv^{2(j-i)}u)w_0.$$
On the other hand, by \eqref{scrQ} again,
\begin{equation}\label{Qn}
\prod_{0\leq l\leq n-1}\ms
Q_n(u\ttv^{2l})\cdot w_0=\exp\bigg(-\sum_{t\geq 1}\frac{1}{[t]_\ttv}\ttg_{n,
t}\bigg(\sum_{0\leq l\leq n-1}\ttv^{2lt}(u\ttv)^{
t}\bigg)\bigg)\cdot w_0
\end{equation}
Applying ln$(\,\,)$ formerly yields,
\begin{equation*}
\begin{split}
-\sum_{t\geq 1}\frac{1}{[t]_\ttv}\ttg_{n,
t}\bigg(\sum_{0\leq l\leq n-1}\ttv^{ (2l+1)t}\bigg)u^{t}\cdot w_0&=\sum_{1\leq i\leq n\atop 1\leq j\leq
\la_n}\ln(1-a\ttv^{2(j-i)}u)w_0\\
&=-\sum_{t\geq 1}\bigg(\sum_{1\leq i\leq n\atop 1\leq j\leq
\la_n}\frac 1 t (a\ttv^{2(j-i)})^t\bigg)u^tw_0.
\end{split}
\end{equation*}
Equating coefficients of $u^t$ gives
$$\frac{1}{[t]_\ttv}\ttg_{n,
t} \sum_{0\leq l\leq n-1}\ttv^{ (2l+1)t}\cdot w_0
=\frac 1 t \sum_{1\leq i\leq n\atop 1\leq j\leq
\la_n}(a\ttv^{2(j-i)})^t w_0.$$
Since
$$\sum_{1\leq j\leq\la_n}\sum_{i=1}^n
(a\ttv^{2(j-i)})^t=\sum_{1\leq j\leq\la_n}\frac{a^t\ttv^{2(j-1)t}(1-\ttv^{-2tn})}{1-\ttv^{-2t}}$$
and $\sum_{0\leq l\leq n-1}\ttv^{(2l+1)t}=\frac{\ttv^{2tn}-1}{\ttv^t-\ttv^{-t}},$
it follows that
$$\ttg_{n,t}\cdot w_0=\frac{[t]_q}{t}\sum_{1\leq j\leq\la_n} (a\ttv^{ 2j-2n-1})^t w_0.$$
Substituting it in \eqref{Qn} gives
\begin{equation*}
\begin{split}
\ms Q_n(u)\cdot w_0&=\exp\bigg(-\sum_{t\geq 1}\frac 1 t
\sum_{1\leq j\leq\la_n}(a\ttv^{2(j-n)}u)^t\bigg)w_0\\
&=\prod_{1\leq j\leq\la_n}(1-a\ttv^{2(j-n)}u)w_0.
\end{split}
\end{equation*}
Hence, we obtain $Q_n(u)=\prod_{1\leq j\leq\la_n}(1-a\ttv^{2(j-n)}u)$.
\end{proof}

Now, \eqref{Q_i's} together with Theorem~\ref{compute P(u)} implies immediately the following.
\begin{Cor}\label{DPforSR}
Suppose $a\in\mbc^*$ and $\la\in\La^+(n,r)$ has $m$ parts. If $L(\la)_a\cong L(\bfQ)$ for some $\bfQ\in\ms Q(n)_r$, then, for all $1\leq i\leq m$, $Q_i(u)$ is the polynomial with degree $\la_i$, constant 1, and roots forming the segment $[a^{-1}\ttv^{-\la_i+2i-1};\la_i)$.
\end{Cor}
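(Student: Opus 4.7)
The plan is to assemble the corollary directly from Theorem~\ref{L(la)a} together with Theorem~\ref{compute P(u)}, so almost all the work is already done; the remaining task is just to telescope the recursion and recognise the resulting root set as a segment.

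First I would solve the recursion $P_i(u) = Q_i(u\ttv^{i-1})/Q_{i+1}(u\ttv^{i+1})$ provided by Theorem~\ref{L(la)a}. Setting $v = u\ttv^{i-1}$, this becomes $Q_i(v) = P_i(v\ttv^{-i+1}) Q_{i+1}(v\ttv^{2})$. Iterating from $i$ up to $m$, and using the base case $Q_{m+1}(u)=1$ when $m<n$ or the explicit formula $Q_n(u)=\prod_{1\le s\le\la_n}(1-a\ttv^{2(s-n)}u)$ when $m=n$ (both supplied by Theorem~\ref{L(la)a}), I obtain the closed form
\[
Q_i(u) = \prod_{j=i}^{m} P_j\bigl(u\ttv^{\,j-2i+1}\bigr)
\]
for $1 \le i \le m$, with the understanding that in the case $m=n$ the last factor $P_n$ is replaced by the known $Q_n(u\ttv^{2(n-i)})$. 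Both cases fit the same pattern once one observes that the explicit expression for $Q_n$ is itself of the form $\prod_{1\le s\le\la_n}(1-a\ttv^{2s-1-n}\cdot u\ttv^{-n+1})$, matching $P_n$ under the convention $\la_{n+1}=0$.

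Next I would substitute the formula $P_j(u) = \prod_{\la_{j+1}+1 \le s \le \la_j}(1-a\ttv^{2s-1-j}u)$ from Theorem~\ref{compute P(u)}. A one-line exponent computation gives
\[
P_j(u\ttv^{\,j-2i+1}) = \prod_{\la_{j+1}+1 \le s \le \la_j}\bigl(1 - a\ttv^{2(s-i)} u\bigr),
\]
in which the exponent on $\ttv$ no longer depends on $j$. Since the index sets $[\la_{j+1}+1,\la_j]$ for $j=i,i+1,\dots,m$ are pairwise disjoint and partition $[1,\la_i]$ (with $\la_{m+1}=0$), the roots of $Q_i(u)$ are precisely the $\la_i$ numbers $a^{-1}\ttv^{2(i-s)}$ for $s=1,2,\dots,\la_i$. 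In particular $\deg Q_i(u) = \la_i$ and $Q_i(0)=1$.

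Finally, I would match this to the segment definition. Rewriting $s=1,2,\dots,\la_i$ as $s-1 = 0,1,\dots,\la_i-1$, the roots become $a^{-1}\ttv^{2i-2s} = c\cdot\ttv^{-\la_i + 1 + 2(\la_i - s)}$ with $c = a^{-1}\ttv^{-\la_i+2i-1}$, which is exactly the segment $[a^{-1}\ttv^{-\la_i+2i-1};\la_i) = (c\ttv^{-\la_i+1},c\ttv^{-\la_i+3},\dots,c\ttv^{\la_i-1})$, finishing the proof. I expect no real obstacle here: the only thing to watch is the bookkeeping of the boundary case $j=m$, where the unified formula is justified by reading Theorem~\ref{L(la)a} with the convention $\la_{m+1}=0$ when $m<n$ and by the separate computation of $Q_n$ in the theorem when $m=n$.
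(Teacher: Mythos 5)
Your proof is correct and follows essentially the same route as the paper: the paper deduces the corollary in one line from the telescoped products \eqref{Q_i's} (established in the proof of Theorem~\ref{L(la)a}, including the explicit $Q_m$, resp.\ $Q_n$) combined with the formula for $P_j(u)$ in Theorem~\ref{compute P(u)}, which is exactly the telescoping and exponent bookkeeping you carry out. Your closed form $Q_i(u)=\prod_{1\leq s\leq\la_i}(1-a\ttv^{2(s-i)}u)$ and the identification of its roots with the segment $[a^{-1}\ttv^{-\la_i+2i-1};\la_i)$ check out in both the $m<n$ and $m=n$ cases.
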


\section{Application to affine Hecke algebras}

 By the evaluation map $\evHa:\afHrC\to\HrC$ defined in \eqref{evHa}, every $\HrC$-module $N$ defines a $\afHrC$-module $N_a$. We now identify the simple $\afHrC$-modules $(E_\mu)_a$ for every partition $\mu$ and $a\in\mbc^*$ in terms of multisegments in $\ms S_r$. Recall from \cite[Th.~1.4]{KL79} that the left cell modules $E_\la$ ($\la\in\La^+(r)$) defined in \eqref{signed permutation module} form a complete set of simple $\HrC$-modules.

By Theorem \ref{L(la)a}, we may define a map
$$\ti\partial:\La^+(n,r)\times\mbc^*\lra \sQ(n)_r,\quad (\la,a)\longmapsto \bfQ(\la,a)=(Q_1(u),\ldots,Q_n(u)),$$
where $Q_n(u)=\prod_{1\leq k\leq\la_n}(1-a\ttv^{2(k-n)}u)$ and
$P_i(u)=\frac{Q_i(u\ttv^{i-1})}{Q_{i+1}(u\ttv^{i+1})}=\prod_{\la_{i+1}+1\leq k\leq\la_i}(1-a\ttv^{2k-1-i}u)$ for $1\leq i\leq n-1$.
If $n> r$ and $\la=(\la_1,\ldots,\la_m)$ has $m(\leq r)$ parts, then  $\bfQ(\la,a)$ has the form
$(Q_1(u),\ldots,Q_m(u),1,\ldots,1)$ as in \eqref{Q_i's}.
We now compute $\partial^{-1}(\bfQ(\la,a))$.
\begin{Lem} Let $n>r$. Suppose the map $\partial^{-1}\circ\ti\partial:\La^+(r)\times\mbc^*\lra\ms S_r$
is given by $(\la,a)\longmapsto \bfs(\la,a)$ and $\la$ has $m$ parts. Then
$$\bfs(\la,a)=\sum_{i=1}^m\sum_{\la_{i+1}+1\leq k\leq\la_i}[a\ttv^{2k-1-i};i),$$
and the partition associated with $\bfs(\la,a)$ is $\la'$.
\end{Lem}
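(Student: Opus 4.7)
The plan is to unwind the definitions on both sides of $\partial(\bfs(\la,a))=\ti\partial(\la,a)$ and compare, using the fact that $\partial:\ms S_r\to\sQ(n)_r$ is a bijection; the hypothesis $n>r$ ensures that every segment in $\ms S_r$ has length strictly less than $n$, so no information is lost by recording only the $(n-1)$-tuple of polynomials $(P_1,\ldots,P_{n-1})$. Once $\partial(\bfs(\la,a))=\ti\partial(\la,a)$ is established, the identity $\bfs(\la,a)=(\partial^{-1}\circ\ti\partial)(\la,a)$ follows immediately. Since both $\partial$ and $\ti\partial$ are completely determined by the $(n-1)$-tuple of polynomials $(P_1,\ldots,P_{n-1})$, it suffices to verify that the $P_i$'s on the two sides agree.

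First I would read off the polynomials associated to $\ti\partial(\la,a)$: directly from Theorem~\ref{L(la)a},
$$P_i(u)=\frac{Q_i(u\ttv^{i-1})}{Q_{i+1}(u\ttv^{i+1})}=\prod_{\la_{i+1}+1\leq k\leq\la_i}(1-a\ttv^{2k-1-i}u),\quad 1\leq i\leq n-1,$$
adopting the convention $\la_{m+1}=\cdots=\la_n=0$ so that $P_i(u)=1$ for $i>m$.

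Next I would compute the $P_i$'s determined by the proposed multisegment $\bfs(\la,a)$ via the rule $P_i(u)=\prod_{j:\nu_j=i}(1-a_ju)$ built into the definition of $\partial$ in \eqref{Q_s}. From the formula defining $\bfs(\la,a)$, the segments of length exactly $i$ are $\{[a\ttv^{2k-1-i};i):\la_{i+1}+1\leq k\leq\la_i\}$ with centers $a\ttv^{2k-1-i}$, giving
$$P_i(u)=\prod_{\la_{i+1}+1\leq k\leq\la_i}(1-a\ttv^{2k-1-i}u)$$
for $1\leq i\leq m$ and $P_i(u)=1$ for $i>m$ (no segments of that length). These match the polynomials from the previous step, so $\partial(\bfs(\la,a))=\ti\partial(\la,a)$ and the first assertion follows.

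For the partition associated to $\bfs(\la,a)$: the number of segments of length exactly $i$ in $\bfs(\la,a)$ is $\la_i-\la_{i+1}$ (with $\la_{m+1}=0$). Arranging these segment lengths in weakly decreasing order produces the partition with $\la_m$ parts equal to $m$, $\la_{m-1}-\la_m$ parts equal to $m-1$, and so on down to $\la_1-\la_2$ parts equal to $1$. This is exactly the conjugate partition $\la'$, since $\la'_j=\#\{k:\la_k\geq j\}$ gives $\la'_j=m$ for $1\leq j\leq\la_m$, $\la'_j=m-1$ for $\la_m<j\leq\la_{m-1}$, and so on. No step in this argument is particularly subtle; it is essentially bookkeeping of indices, and the only thing to watch is that the ranges $\la_{i+1}+1\leq k\leq\la_i$ are non-empty precisely for $i\leq m$, which is automatic since $\la$ is a partition with $m$ parts.
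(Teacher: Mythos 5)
Your proof is correct and takes essentially the same route as the paper's: read off the inverse roots of the $P_i(u)$ coming from Theorem~\ref{L(la)a}, match them against the rule $P_i(u)=\prod_{\nu_j=i}(1-a_ju)$ in the definition of $\partial$ (using that for $n>r$ every segment has length $<n$, so the multisegment is recovered from $(P_1,\ldots,P_{n-1})$ and $\partial$ is a bijection), and then count segment lengths to get $\la'$. One cosmetic slip: the range $\la_{i+1}+1\leq k\leq\la_i$ can be empty for some $i\leq m$ as well (when $\la_i=\la_{i+1}$), not only for $i>m$; this is harmless, since empty products and empty families of segments match on both sides.
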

\begin{proof}
Since the (inverses of the) roots of $P_i(u)$ are
$$\{a\ttv^{2k-1-i}\mid  \la_{i+1}+1\leq k\leq\la_i\},$$
by the definition of $\partial$, $\bfs(\la,a)$ consists of $\la_i-\la_{i+1}$ segments with length $i$ and centers $a\ttv^{2k-1-i}, \la_{i+1}+1\leq k\leq\la_i$, for all $1\leq i\leq m$.
\end{proof}

Recall from \eqref{signed permutation module} that, for each $\mu\in\La^+(r)$,  $E_\mu$ is the left cell module  for $\HrC$
defined by
Kazhdan--Lusztig's C-basis \cite{KL79} associated with the left cell containing $w_{0,\mu}$, where $w_{0,\mu}$ is the longest element in $\fS_\mu$. If $S_\mu$ denotes the Specht module contained in $\sH(r)_\mbc x_\mu$ (so that $S_\mu=\HrC y_{\mu'}T_{w_{\mu'}} x_\mu $), then $S_{\la'}\cong E_\la$.

\begin{Prop}For $a\in\mbc^*$ and $\la\in\La^+(r)$. Let $\bfs=\bfs(\la,a)$ as above. Then $(S_\la)_a\cong V_\bfs$.
\end{Prop}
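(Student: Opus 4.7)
The plan is to identify $(S_\la)_a$ with $V_\bfs$ by first showing it is already simple as an $\afHrC$-module, and then matching it with $V_\bfs$ via the affine Schur functor. Throughout I would assume $n>r$, which is harmless since the modules $(S_\la)_a$ and $V_\bfs$ are intrinsically defined independently of $n$.

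The first step is to observe that $(S_\la)_a$ is a simple $\afHrC$-module. Since $\evHa$ restricts to the identity on $\HrC\subseteq\afHrC$, the restriction $(S_\la)_a|_{\HrC}$ equals $S_\la$, which is simple because $\ttv$ is not a root of unity and $\HrC$ is semisimple. Any $\afHrC$-submodule of $(S_\la)_a$ is in particular an $\HrC$-submodule, so $(S_\la)_a$ is simple. By Theorem \ref{classification irr affine Hecke algebra} there is a unique $\bfs'\in\ms S_r$ with $(S_\la)_a\cong V_{\bfs'}$, and it remains to prove $\bfs'=\bfs$.

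The second step is to apply the affine Schur functor $\sF=\OgC^{\ot r}\ot_{\afHrC}-$. Using the right $\afHrC$-module isomorphism $\OgC^{\ot r}\cong\OgnC^{\ot r}\ot_{\HrC}\afHrC$ (used in the construction of $\evSa$) together with $(S_\la)_a|_{\HrC}=S_\la$, I obtain an $\afSrC$-module isomorphism
$$\sF((S_\la)_a)\;\cong\;\OgnC^{\ot r}\ot_{\HrC}S_\la,$$
where $\afSrC$ acts on the right-hand side via $\evSa$; the $\afSrC$-equivariance is a direct diagram chase using the defining identity $\evSa(\vi)(\ep_a(x))=\ep_a(\vi(x))$. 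Standard (semisimple) $q$-Schur--Weyl duality then identifies $\OgnC^{\ot r}\ot_{\HrC}S_\la\cong L(\la)$ as $\SrC$-modules, so $\sF((S_\la)_a)\cong L(\la)_a$ as $\afSrC$-modules. Theorem \ref{L(la)a}, combined with the defining relation $\bfQ_\bfs=\bfQ(\la,a)$ coming from $\partial^{-1}\circ\ti\partial$, gives $\sF((S_\la)_a)\cong L(\bfQ_\bfs)$.

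The third step finishes things: Theorem \ref{n>r representation} applied to $\bfs'$ gives $\sF(V_{\bfs'})\cong L(\bfQ_{\bfs'})$, and combining with the previous step yields $L(\bfQ_{\bfs'})\cong L(\bfQ_\bfs)$. The Drinfeld polynomial classification (Theorem \ref{classification of simple afUglC-modules}) forces $\bfQ_{\bfs'}=\bfQ_\bfs$, and since $\partial$ is a bijection, $\bfs'=\bfs$. The main obstacle I expect is the bookkeeping in the second step: the isomorphism $\OgC^{\ot r}\cong\OgnC^{\ot r}\ot_{\HrC}\afHrC$ only manifestly respects the right $\afHrC$-structure, so one has to trace carefully how the left $\afSrC$-action transports through $\ep_a$ to yield the inflated $\SrC$-action on the other side. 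Everything else is a direct assembly of earlier results.
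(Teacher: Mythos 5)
Your proof is correct, but it runs in the opposite direction to the paper's. The paper starts from the isomorphisms $L(\la)_a\cong L(\bfQ(\la,a))\cong\OgC^{\ot r}\ot_{\afHrC}V_\bfs$ (Theorems \ref{n>r representation} and \ref{L(la)a}), restricts to obtain an $\SrC$-module isomorphism $L(\la)\cong\OgnC^{\ot r}\ot_{\HrC}V_\bfs$, and then applies the classical Schur functor $N\mapsto eN$ to conclude that $V_\bfs$ is $\HrC$-irreducible, hence equals $E_{\la'}\cong S_\la$, from which $(S_\la)_a\cong V_\bfs$ is read off. You instead begin with $(S_\la)_a$: since $\evHa$ is the identity on $\HrC$ and $\ttv$ is not a root of unity, its restriction to $\HrC$ is the simple module $S_\la$, so $(S_\la)_a$ is $\afHrC$-simple and is some $V_{\bfs'}$ by Theorem \ref{classification irr affine Hecke algebra}; you then push it through the affine tensor functor $\OgC^{\ot r}\ot_{\afHrC}(-)$, using the compatibility $\OgC^{\ot r}\ot_{\afHrC}N_a\cong(\OgnC^{\ot r}\ot_{\HrC}N)_a$ as $\afSrC$-modules (this does need the equivariance check you indicate, via $\ep_a\circ\vi=\evSa(\vi)$ on $\OgnC^{\ot r}$ and $\HrC$-balancedness, but it is routine), together with semisimple Schur--Weyl duality and Theorem \ref{L(la)a}, and finally invoke Theorem \ref{n>r representation}, the Drinfeld-polynomial classification, and the bijectivity of $\partial$ to force $\bfs'=\bfs$. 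Both arguments rest on the same two identification theorems and the assumption $n>r$; the paper's route is shorter and needs no compatibility lemma for evaluated modules, whereas yours takes semisimple Schur--Weyl duality ($\OgnC^{\ot r}\ot_{\HrC}S_\la\cong L(\la)$, in the paper's conventions) as an input and, in return, makes completely explicit why the full affine module structure of $V_\bfs$ --- in particular the evaluation parameter $a$ entering through the action of the $X_j$ --- agrees with that of $(S_\la)_a$, a point the paper's concluding step ``hence $V_\bfs=E_{\la'}$, therefore $(S_\la)_a\cong V_\bfs$'' treats rather implicitly; the $\HrC$-irreducibility of $V_{\bfs(\la,a)}$ then comes out as a byproduct of your argument rather than being the pivot of it.
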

\begin{proof}Assume $n>r$. Then there is an idempotent $e\in\SrC$ such that $e\SrC e\cong\HrC$. This algebra isomorphism gives rise to the so-called Schur functor from the category of finite dimensional $\SrC$-modules to the category of finite dimensional $\HrC$-modules by sending $N$ to $eN$.
By Theorems~\ref{n>r representation} and \ref{L(la)a}, we have
$$L(\la)_a\cong L(\bfQ)\cong\OgC^{\ot r}\ot_{\afHrC}V_\bfs,$$ where $\bfQ=\bfQ(\la,a)$.
Restriction gives an $\SrC$-module isomorphism $L(\la)\cong\OgnC^{\ot r}\ot_{\HrC}V_\bfs$.
By applying Schur's functor, we see that $\HrC$-module $V_\bfs$ is irreducible. Hence,
$V_\bfs=E_{\la'}$. Therefore, $(S_{\la})_a\cong V_\bfs$.
\end{proof}

\begin{Cor} {\rm (1)} For any partition $\la\in\La^+(r)$ and $n\geq r$, if $L(\la)$ is the simple $\SrC$-module with highest weight $\la$, then $eL(\la)\cong S_{\la}$.

{\rm(2)} If $\bfQ\in \sQ(n)_r$ and $\la=(\deg Q_1,\ldots,\deg Q_n)$, and $\mu$ is a weight of $L(\bfQ)$, then $\mu\unlhd\la$ under the dominance order $\unlhd$.
\end{Cor}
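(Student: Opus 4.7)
The plan splits along the two parts of the corollary. For part (1), I would apply the Schur functor $N\mapsto eN$ to the $\SrC$-module isomorphism $L(\la)\cong\OgnC^{\ot r}\ot_{\HrC}V_{\bfs(\la,a)}$, which is exactly what the proof of the preceding proposition produces by restricting $L(\la)_a\cong\OgC^{\ot r}\ot_{\afHrC}V_{\bfs(\la,a)}$ along the inclusion $\HrC\hookrightarrow\afHrC$. The key ingredient is the standard $q$-Schur fact: for $n\geq r$, the idempotent $e=e_{(1^r,0^{n-r})}\in\SrC$ satisfies $e\SrC e\cong\HrC$ and $e\OgnC^{\ot r}\cong\HrC$ as right $\HrC$-modules. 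Applying $e(-)$ then yields $eL(\la)\cong V_{\bfs(\la,a)}$. Combining with the identifications $V_{\bfs(\la,a)}\cong E_{\la'}$ (extracted from the preceding proposition) and $S_\la\cong E_{\la'}$ (the Specht/cell module convention $S_{\la'}\cong E_\la$ recorded just before the proposition) yields the desired $eL(\la)\cong S_\la$.

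For part (2), my plan is to run a standard highest-weight/triangularity argument. By construction $L(\bfQ)$ is the simple quotient of the Verma-type module $M(\bfQ)=\afUglC/I(\bfQ)$, so it is generated by a pseudo-highest weight vector $w_0$ with $\ttk_i w_0=\ttv^{\la_i}w_0$ (where $\la_i=\deg Q_i$) and annihilated by every $\ttx_{j,s}^+$ for $1\leq j\leq n-1$ and $s\in\mbz$. Therefore $L(\bfQ)$ is spanned by PBW monomials in the lowering operators $\ttx_{j,s}^-$ applied to $w_0$, and relation (QLA2) shows that each $\ttx_{j,s}^-$ lowers the $\ttk$-weight by $\alpha_j=\bfe_j-\bfe_{j+1}$, independently of the loop parameter $s$. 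Consequently any weight $\mu$ of $L(\bfQ)$ takes the form $\mu=\la-\sum_{j=1}^{n-1}c_j\alpha_j$ with integers $c_j\geq 0$ (writing $c_0=c_n=0$ for convenience, one has $\mu_i=\la_i-c_i+c_{i-1}$). Summing the first $k$ coordinates gives
\[\sum_{i=1}^k\mu_i=\sum_{i=1}^k\la_i-c_k\leq\sum_{i=1}^k\la_i\qquad\text{for all }1\leq k\leq n,\]
which is precisely $\mu\unlhd\la$.

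The principal obstacle lies in part (1): one must verify that the chosen idempotent $e$ realises $e\OgnC^{\ot r}\cong\HrC$ compatibly with the Specht convention $S_{\la'}\cong E_\la$ adopted in the paper, so that the image of $L(\la)$ under the Schur functor is $S_\la$ and not $S_{\la'}$; a minor bookkeeping point is to reduce the boundary case $n=r$ to $n>r$ (where the preceding proposition strictly applies) by padding with zeros. Part (2), by contrast, is essentially a routine unpacking of the definition of $L(\bfQ)$ together with relation (QLA2), and should present no real difficulty.
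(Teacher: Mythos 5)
Your part (1) is essentially the paper's own argument (the paper dismisses it with ``follows the proof above''): apply the Schur functor $e(-)$ to the restriction isomorphism $L(\la)\cong\OgnC^{\ot r}\ot_{\HrC}V_{\bfs(\la,a)}$ from the preceding proposition, use $e\OgnC^{\ot r}\cong x_{(1^r)}\HrC\cong\HrC$, and conclude $eL(\la)\cong V_{\bfs(\la,a)}\cong E_{\la'}\cong S_\la$; your convention check ($S_{\la'}\cong E_\la$, hence $E_{\la'}\cong S_\la$) and the padding remark for $n=r$ are exactly the right bookkeeping. Part (2), however, you prove by a genuinely different route. The paper reduces to $n>r$ by a Schur functor, invokes Theorem~\ref{n>r representation} to write $L(\bfQ)\cong\OgC^{\ot r}\ot_{\afHrC}V_\bfs$, decomposes $V_\bfs$ as an $\HrC$-module via \eqref{signed permutation module} as $E_{\la'}\oplus\bigl(\oplus_{\nu\rhd\la'}m_\nu E_\nu\bigr)$, and then uses part (1) to see that, as a ${\rm U}_\mbc(\frak{gl}_n)$-module, $L(\bfQ)$ is a direct sum of $L(\la)$ and various $L(\nu')$ with $\nu'\lhd\la$, whence every weight is $\unlhd\la$. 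You instead argue directly inside $\afUglC$: the image of the pseudo-highest weight vector has $\ttk$-weight $\la=(\deg Q_1,\ldots,\deg Q_n)$ (by the definition of $I(\bfQ)$) and is killed by all $\ttx^+_{j,s}$, so by the triangular spanning $\afUglC=U^-U^0U^+$ of the new realization together with (QLA2) every weight of $L(\bfQ)$ has the form $\la-\sum_j c_j(\epsilon_j-\epsilon_{j+1})$ with $c_j\ge 0$, which is precisely dominance. This is correct; the only point to make explicit (or cite) is the triangular spanning statement, which does follow from (QLA1)--(QLA7) by the usual rewriting and is standard. Your route is more elementary and self-contained, needing neither the reduction to $n>r$ nor the Hecke-side structure of $V_\bfs$ --- it is in spirit the standard proof, which the authors note is the one in \cite[Lem.~4.5.1]{DDF} and which they deliberately avoid; the paper's route buys, beyond the weight bound, the explicit ${\rm U}_\mbc(\frak{gl}_n)$-module decomposition of $L(\bfQ)$ as a byproduct of part (1).
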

\begin{proof} Statement (1) follows the proof above. It remains to prove the second statement. By applying another type of Schur functor, we may assume $n>r$. Thus, $L(\bfQ)\cong\OgC^{\ot r}\ot_{\afHrC}V_\bfs$, for some
$\bfs\in\ms S_r$ such that $\bfQ=\bfQ_\bfs$, and $\la'$ is the partition associated with $\bfs$. Since, as a $\HrC$-module, $V_\bfs\cong E_{\la'}\oplus(\oplus_{\nu\rhd\la'}n_{\nu,\la}E_\nu)$ by \eqref{signed permutation module}.
Thus, as a $\text{U}_\mbc(\frak{gl}_n)$-module, the first assertion implies
$$L(\bfQ)\cong L(\la)\oplus (\oplus_{\nu\rhd\la'}n_{\nu,\la}\OgnC^{\ot r}\ot S_{\nu'})\cong L(\la)\oplus (\oplus_{\nu\rhd\la'}n_{\nu,\la}L(\nu')),$$
where the second sum is over $\nu$ with $\OgnC^{\ot r}\ot S_{\nu'}\neq 0$.
Now, since $\nu\rhd\la'$ implies $\nu'\lhd\la$ and the weight spaces of $L(\bfQ)$ as $\afUglC$-module or as
$\text{U}_\mbc(\frak{gl}_n)$ are the same, our assertion follows.
\end{proof}

Part (2) of the result above is \cite[Lem.~4.5.1]{DDF}. The proof here is different.


\begin{thebibliography}{99}\frenchspacing
\bibitem{Be} J. Beck, {\em Braid group action and quantum affine
algebras}, Comm. Math. Phys. {\bf 165} (1994), 655--568.

\bibitem{CP91}
V. Chari and A. Pressley, {\em Quantum affine algebras}, Comm. Math.
Phys. {\bf 142} (1991), 261--283.

\bibitem{CP94}
V. Chari and A. Pressley, {\em
Small representations of quantum affine algebras},
Lett. Math. Phys. {\bf 30} (1994), 131--145.

\bibitem{CPbk}
V. Chari and A. Pressley, {\em A Guide to Quantum Groups}, Cambridge
University Press, Cambridge, 1994.

\bibitem{CP96}
V. Chari and A. Pressley, {\em Quantum affine algebras and affine Hecke algebras},
Pacific J. Math. {\bf 174} (1996), 295--326.

\bibitem{DDF} B. Deng, J. Du and Q. Fu,
{\em A double Hall algebra approach to affine quantum Schur--Weyl theory}, LMS Lecture Notes Series (to appear).

\bibitem{DJ2} R. Dipper and G. James, {\em Blocks and idempotents of Hecke algebras of general
       linear groups}, Proc. London Math. Soc {\bf 53} (1987), 57--82.


\bibitem{Dr88} V.~G. Drinfeld, {\em A new realization of Yangians and quantized
affine alegbras}, Soviet Math. Dokl. {\bf 32} (1988), 212--216.

\bibitem{FM} E. Frenkel and E. Mukhin, {\em The Hopf algebra ${\rm
Rep}\;U_q(\widehat{\frak{gl}}_\infty)$}, Sel. math., New Ser. {\bf 8}
(2002), 537--635.



\bibitem{JM}
G. James and A. Mathas, {\em The Jantzen sum formula for cyclotomic $q$-Schur algebras} Trans. Amer. Math. Soc. {\bf 352} (2000),  5381--5404.


\bibitem{Ji}
M. Jimbo, {\em A $q$-analogue of $U(\frak {gl}(N+1))$,
Hecke algebra, and the Yang-Baxter equation}, Letters in Math.
Physics {\bf11} (1986), 247--252.

\bibitem{KL79}
D. Kazhdan and G. Lusztig, {\em Representations of Coxeter groups and Hecke Algebras,}
Invent. Math. {\bf 53} (1979), 165--184.

\bibitem{LR}
Z. Lin, and H. Rui, {\em Cyclotomic $q$-Schur algebras and Schur-Weyl duality}, Representations of algebraic groups, quantum groups, and Lie algebras, 133--155, Contemp. Math., {\bf 413}, Amer. Math. Soc., Providence, RI, 2006.

\bibitem{Lu93} G. Lusztig, {\em Introduction to quantum groups},
Progress in Math. {\bf 110}, Birkh\"auser, 1993.

\bibitem{Mur} G. E. Murphy, {\em On the representation theory of the symmetric groups and associated
Hecke algebras}, J. Algebra {\bf 152} (1992) 492--513.

\bibitem{Ro}
J. D. Rogawski, {\em On modules over the Hecke algebra of a $p$-adic
group}, Invent. Math. {\bf 79} (1985), 443--465.

\bibitem{VV} M. Varagnolo and E. Vasserot, {\em On the
decomposition matrices of the quantized Schur algebra}, Duke Math.
J. {\bf 100} (1999), 267--297.
\end{thebibliography}
\end{document}